\documentclass[12pt]{amsart}
\usepackage{graphicx}
\usepackage[T1]{fontenc}
\usepackage{color}
\usepackage{tikz}
\usepackage{hyperref}

\hoffset=-0.5in \textwidth=6in

\definecolor{uniblau}{cmyk}{1,0.5,0,0}

\newtheorem{theorem}{Theorem}[section]
\newtheorem{lemma}[theorem]{Lemma}
\newtheorem{prop}[theorem]{Proposition}
\newtheorem{cor}[theorem]{Corollary}

\theoremstyle{definition}
\newtheorem{defi}[theorem]{Definition}

\theoremstyle{remark}
\newtheorem{remark}[theorem]{Remark}

\numberwithin{equation}{section}

\newcommand{\R}{{\mathbb R}}
\newcommand{\nat}{{\mathbb N}}

\newcommand{\C}{{\mathbb C}}
\newcommand{\tim}{{\mathbb T}}

\newcommand{\E}{\mathbb E}


\allowdisplaybreaks

\begin{document}

\sloppy

\title[Integral representation of dilatively stable processes]{An integral representation of dilatively stable processes with independent increments}

\author{Thorsten Bhatti}
\address{Thorsten Bhatti, Mathematical Institute, Heinrich-Heine-University D\"usseldorf, Universit\"atsstr.\ 1, D-40225 D\"usseldorf, Germany}
\email{thorsten.bhatti\@@{}uni-duesseldorf.de}

\author{Peter Kern}
\address{Peter Kern, Mathematical Institute, Heinrich-Heine-University D\"usseldorf, Universit\"atsstr.\ 1, D-40225 D\"usseldorf, Germany}
\email{kern\@@{}hhu.de}

\thanks{This work has been supported by the German Academic Exchange Service (DAAD) financed by funds of the Federal Ministry of Education and Research (BMBF) under project no. 57059326}

\date{\today}

\begin{abstract}
Dilative stability generalizes the property of selfsimilarity for infinitely divisible stochastic processes by introducing an additional scaling in the convolution exponent. Inspired by results of Igl\'oi \cite{Igloi}, we will show how dilatively stable processes with independent increments can be represented by integrals with respect to time-changed L\'evy processes. Via a Lamperti-type transformation these representations are shown to be closely connected to translatively stable processes of Ornstein-Uhlenbeck-type, where translative stability generalizes the notion of stationarity. The presented results complement corresponding representations for selfsimilar processes with independent increments known from the literature.
\end{abstract}

\keywords{Dilative stability, translative stability, Lamperti transform, additive process, random integral representation, { wide sense Ornstein-Uhlenbeck process, quasi-selfsimilar process, time-stable process, infinite divisibility with respect to time}}

\subjclass[2010]{Primary 60G51; Secondary 60G18,
60H05}

\maketitle

\baselineskip=18pt

\section{Introduction}

{ Many processes in physics and other sciences show certain space-time scaling properties for which the class of self-similar processes provides a natural tool in stochastic modeling. For infinitely divisible processes Igl\'oi \cite{Igloi} introduced a more general scaling property called dilative stability with an additional scaling in the convolution exponent. We denote by $\Psi^{X}_{t_1,\ldots,t_k}$ the {\it log-characteristic function} or the {\it L\'evy exponent} of $(X_{t_1},\ldots,X_{t_k})$ for an infinitely divisible process $X=(X_t)_{t\in\mathbb T}$, where $\mathbb T$ is either $\R$ or $\R_+ = [0, \infty)$ and $t_1,\ldots,t_k\in\mathbb T$, i.e.\ $\Psi^{X}_{t_1,\ldots,t_k}:\R^k\to\C$ is the unique continuous function with $\Psi^{X}_{t_1,\ldots,t_k}(0,\ldots,0)=0$ and
$$\E \left[\exp\Big( i \sum_{j=1}^k \theta_jX_{t_j}\Big)\right] = \exp\left( \Psi^{X}_{t_1,\ldots,t_k}(\theta_1,\ldots,\theta_k)\right)$$
for all $\theta_1,\ldots,\theta_k\in\R$. Following \cite{BPK}, the infinitely divisible process $X$ is called $(\alpha,\delta)$-{\it dilatively stable} for some parameters $\alpha,\delta\in\R$ if 
\begin{equation}\label{DSdefi}
\Psi^{X}_{Tt_1,\ldots,Tt_k}(\theta_1,\ldots,\theta_k)=T^{\delta}\Psi^{X}_{t_1,\ldots,t_k}(T^{\alpha-\frac{\delta}{2}}\theta_1,\ldots,T^{\alpha-\frac{\delta}{2}}\theta_k)
\end{equation}
holds for all $T>0,\,k \in \mathbb N,\,t_1,\ldots,t_k\in\mathbb T$ and $\theta_1,\ldots,\theta_k\in\R$. It is immediately clear that for $\delta=0$ and $\alpha>0$ an $(\alpha,\delta)$-dilatively stable process is $\alpha$-selfsimilar. We remark that the original definition of Igl\'oi \cite{Igloi} is more restrictive (e.g., the process is assumed to be non-Gaussian and to possess moments of arbitrary order) but we use the more general approach from \cite{BPK}. The class of dilatively stable processes contains some interesting classes of processes that are not selfsimilar, see \cite{Igloi,BPK} for details. In particular, additionally assuming weak right-continuity of the infinitely divisible process $X$, dilative stability of $X$ is equivalent to the notion of aggregate-similarity introduced by Kaj \cite{Kaj}, see Proposition 1.5 in \cite{BPK}. From this point of view, dilatively stable processes naturally appear as the class of limit processes in certain aggregation models as shown in Theorem 3.1 of \cite{KW}. Examples of dilatively stable limit processes in aggregation schemes appear in \cite{Kaj,PS}, see section 3 in \cite{BPK} for a detailed analysis.}

In this paper we will restrict our considerations to {\it additive processes} $(X_t)_{t\in\mathbb T}$ which are defined as in \cite{Sat1} by the following conditions:
\begin{itemize}
\item[(i)] The process has {\it independent increments}, i.e.\ for any $ t_0 < t_1 < \ldots <t_n$ in $\mathbb T$ the random variables $X_{t_0}, X_{t_1}- X_{t_0}, X_{t_2}-X_{t_1}, \ldots, X_{t_n} - X_{t_{n-1}}$ are independent.
\item[(ii)] The process is {\it stochastically continuous}, i.e.\ $P\{ | X_s -X_t | > \varepsilon\}\to0$ as $s\to t\in\mathbb T$ for any $\varepsilon>0$. 
\item[(iii)] The process has {\it c\`adl\`ag paths}, i.e.\ almost surely the mapping $t \mapsto X_t$ is right-continuous with left limits.
\item[(iv)] $X_0 = 0$ almost surely.
\end{itemize}
It is well known that additive selfsimilar processes are closely connected to selfdecomposable random variables and thus can be represented as integrals with respect to a L\'evy process; cf.\ Wolfe \cite{Wol1,Wol2}, Jurek and Vervaat \cite{Jur,JurVer} and Sato \cite{Sat0}. In order that the random integrals do properly exist, the L\'evy process necessarily must have a finite logarithmic moment. Certain extensions of the integral representation for additive operator-selfsimilar and semi-selfsimilar processes are given in \cite{JPY} and \cite{BK,MaeSat2}, respectively. Further, the Lamperti transform \cite{Lam} gives a well known correspondence between selfsimilar processes and stationary processes. The latter are stationary Ornstein-Uhlenbeck (OU) processes in case of additive selfsimilar processes and the integral representation of an additive selfsimilar process is directly related to the integral representation of the corresponding OU-process.

Our aim is to generalize the above mentioned integral representations and connections for the larger class of additive dilatively stable processes in Section 2. As already laid out in sections 2.5 and 2.6 of Igl\'oi \cite{Igloi}, these are integrals with respect to certain time-changed L\'evy processes. Since Igl\'oi \cite{Igloi} requires finite moments of arbitrary order, for our less restrictive definition \eqref{DSdefi} of dilative stability we are particularly interested in deriving appropriate moment conditions for the driving L\'evy processes. In case $\alpha=\delta/2$ it will turn out that there is also a connection to additive quasi-selfsimilar processes introduced by Maejima and Ueda \cite{MU2}; see Remark \ref{MaeUeda}.

A certain generalization of the Lamperti transform directly relates dilatively stable processes to so-called translatively stable processes. The definition of the latter processes also goes back to Igl\'oi \cite{Igloi} and is directly connected to processes which are infinitely divisible in time and to time-stable processes as introduced by Mansuy \cite{Man}, respectively Kopp and Molchanov \cite{KM}. We will lay out this connection in Section 3 and, inspired by Igl\'oi \cite{Igloi}, we will further show how our integral representation of additive dilatively stable processes from Section 2 is related to certain translatively stable processes of OU-type via a Lamperti-type transformation.

\section{Random Integral representation}

We use the following construction of random integrals as almost surely pathwise limits of Riemann-Stieltjes sums which goes back to Wolfe \cite{Wol1,Wol2} or Jurek and Vervaat \cite{Jur,JurVer}, cf. also Lemma 2.1 in \cite{BK}.
Let $Y=(Y_t)_{t \in {\mathbb T}}$ be an additive process on $\R$ and let $A:{\mathbb T} \to\R$ be continuously differentiable. Then for any $ a < b < \infty $ and any sequence of partitions  $a=t_0^{(n)} \leq s_1^{(n)} < t_1^{(n)} \leq s_2^{(n)} < \ldots \leq s_n^{(n)} < t_n^{(n)} =b$ of $[a,b] \subseteq {\mathbb T}$ with $\max_{1 \leq j \leq n}(t_j^{(n)} - t_{j-1}^{(n)}) \to 0$ as $ n \to \infty $ we have 
 \begin{align*}
 \sum_{j=1}^n \big( A(t_j^{(n)}) - A(t_{j-1}^{(n)}) \big) \, Y_{s_j^{(n)}} \to \int_a^b A'(t) Y_t \, dt \quad\text{ almost surely,}
 \end{align*}
 where the integral exists pathwise as a Riemann integral and the
 exceptional nullset does not depend on the particular choice of partitions. Now we are able to define a {\it random integral} by formal integration by parts
 \begin{equation}\label{ibp}
  \int_a^b A(t) dY_t := A(b)Y_b - A(a)Y_a - \int_a^b A'(t)Y_t \,dt
 \end{equation}
and this random integral can be pathwise approximated by Riemann-Stieltjes sums 
\begin{align*}
 \sum_{j=0}^n A(t_j^{(n)})\big(Y_{s_{j+1}^{(n)}} - Y_{s_j^{(n)}}\big) \to
 \int_a^b A(t) dY_t \quad\text{ almost surely,}
\end{align*}
where we define $s_0^{(n)} := a $ and $s_{n+1}^{(n)}:= b$. In this context the additive process $Y$ is called the {\it background driving process}. We will frequently make use of the following change of variables formula which is an easy consequence of the random integral construction.
For a continuous non-increasing or non-decreasing function $\gamma : {\mathbb T} \to \mathbb R$ and $[a,b] \subseteq {\mathbb T}$ we have 
 \begin{align*}
  \int_a^b A(\gamma(t)) d\big(Y_{\gamma (t)} - Y_{\gamma(0)}\big) = \int_{\gamma
  (a)}^{\gamma(b)} A(t) dY_t,
 \end{align*}
where in case $\gamma(b) < \gamma(a)$ the random integral on the right-hand side is defined by 
\begin{align*}
 \int_c^d A(t)dY_t = - \int_d^c A(t) dY_t \quad\text{ for any } c,d \in\tim.
\end{align*}
Our background driving process $Y$ itself will be defined by random integrals of the following kind.
\begin{lemma}\label{driving}
Let $X=(X_t)_{t \geq 0}$ be an additive process. Then the process
$Y=(Y_t)_{t \in \mathbb R}$ { given by}
\begin{equation}\label{Ydef}
Y_t =\int_{1}^{e^t} u^{-\alpha + \delta /2 } dX_{u}= \begin{cases}
 \int_{1}^{e^t} u^{-\alpha + \delta /2 } dX_{u}  , & \text{for } t \geq 0 \\
-\int_{e^t}^{1} u^{-\alpha + \delta /2 } dX_{u}, & \text{for }t < 0
\end{cases}
\end{equation}
is again additive.
\end{lemma}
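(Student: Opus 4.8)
The plan is to check the four defining properties (i)--(iv) of an additive process for $Y$, deducing each from the corresponding property of $X$ through the random integral. First I would record how the increments of $Y$ look. The integration-by-parts definition \eqref{ibp} makes the random integral additive over adjacent subintervals, so for $s<t$
\[
 Y_t-Y_s=\int_{e^s}^{e^t}u^{-\alpha+\delta/2}\,dX_u ,
\]
and in particular $Y_0=\int_1^1 u^{-\alpha+\delta/2}\,dX_u=0$, which is (iv). Since $u\mapsto u^{-\alpha+\delta/2}$ is continuously differentiable on every compact subinterval of $(0,\infty)$ and $X$ is additive, each of these integrals exists pathwise in the Riemann--Stieltjes sense recalled above, so $Y$ is well defined.

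For the independent increments (i), fix $t_0<t_1<\dots<t_n$. By the increment identity the variables $Y_{t_0},\,Y_{t_1}-Y_{t_0},\dots,Y_{t_n}-Y_{t_{n-1}}$ are random integrals of $u\mapsto u^{-\alpha+\delta/2}$ against $X$ over the intervals $[1,e^{t_0}],[e^{t_0},e^{t_1}],\dots,[e^{t_{n-1}},e^{t_n}]$ (read with the sign convention when an endpoint lies below $1$), which have pairwise disjoint interiors. Hence the approximating Riemann--Stieltjes sums of the distinct increments are measurable functions of increments of $X$ over disjoint subintervals and are therefore jointly independent, $X$ having independent increments. Passing to the almost sure limit preserves this joint independence, which is (i).

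For (ii) and (iii) I would use the explicit form \eqref{ibp},
\[
 Y_t=(e^t)^{-\alpha+\delta/2}X_{e^t}-X_1-\int_1^{e^t}\Big(-\alpha+\tfrac{\delta}{2}\Big)u^{-\alpha+\delta/2-1}X_u\,du .
\]
The first term is c\`adl\`ag, being the continuous factor $(e^t)^{-\alpha+\delta/2}$ times the composition of the c\`adl\`ag process $X$ with the continuous increasing time change $t\mapsto e^t$; the integral term is continuous in $t$ because $X$ has locally bounded (c\`adl\`ag) paths and the upper limit $e^t$ varies continuously. Thus $Y$ has c\`adl\`ag paths, which is (iii). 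For stochastic continuity (ii), as $s\to t$ the prefactor converges while $X_{e^s}\to X_{e^t}$ in probability by stochastic continuity of $X$, and the Riemann integral term converges almost surely; hence $Y_s\to Y_t$ in probability.

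The hard part will be the independent-increments step, specifically the clean justification that random integrals over intervals with disjoint interiors are independent. This hinges on the pathwise Riemann--Stieltjes approximation, so that the approximating sums depend on disjoint families of increments of $X$, together with the fact that almost sure convergence preserves joint (and not merely pairwise) independence; the latter is most safely organized through the joint characteristic function of $Y_{t_0},\,Y_{t_1}-Y_{t_0},\dots,Y_{t_n}-Y_{t_{n-1}}$, letting the mesh of the partitions tend to zero.
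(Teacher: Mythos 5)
Your handling of (ii), (iii) and (iv) is exactly the paper's route: the paper also starts from the integration-by-parts identity for $Y_t-Y_s$, obtains stochastic continuity from stochastic continuity of $X$ (boundary terms) plus almost sure convergence of the Riemann integral term (the integrand being a.s.\ bounded on compacts), and reads off c\`adl\`ag paths and $Y_0=0$ directly from the construction; your write-up merely fills in details the paper compresses into one sentence.

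However, your independent-increments step contains a genuine error. When $t_0<0$ the interval carrying $Y_{t_0}$ is $[e^{t_0},1]$, and it is \emph{not} disjoint from the increment intervals: for $t_0<t_1\leq 0$ one has $[e^{t_0},e^{t_1}]\subseteq[e^{t_0},1]$, while for $t_1>0$ one has $[e^{t_0},1]\subseteq[e^{t_0},e^{t_1}]$. So the claim of pairwise disjoint interiors fails whenever $t_0<0$, and in fact the joint independence of the family $Y_{t_0},\,Y_{t_1}-Y_{t_0},\dots$ that you set out to prove is then false. Concretely, let $X$ be a standard Brownian motion and $\alpha=\delta/2$, so that $Y_t=X_{e^t}-X_1$; then for $t_0<t_1<0$,
\begin{equation*}
\Cov\big(Y_{t_0},\,Y_{t_1}-Y_{t_0}\big)=\Cov\big(X_{e^{t_0}}-X_1,\,X_{e^{t_1}}-X_{e^{t_0}}\big)=-\big(e^{t_1}-e^{t_0}\big)\neq 0 .
\end{equation*}
The resolution is that for a process indexed by all of $\R$ the property ``independent increments'' must be read --- as the paper implicitly does, e.g.\ when it treats two-sided L\'evy processes as processes with independent increments --- as: for all $t_0<t_1<\dots<t_n$ the increments $Y_{t_1}-Y_{t_0},\dots,Y_{t_n}-Y_{t_{n-1}}$ are jointly independent, with $Y_{t_0}$ itself adjoined only when $t_0\geq 0$ (so that $[1,e^{t_0}]$ really is disjoint from the other intervals). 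Your argument --- Riemann--Stieltjes sums built from $X$-increments over the disjoint intervals $[e^{t_{j-1}},e^{t_j}]$, joint independence preserved in the almost sure limit via joint characteristic functions --- is correct precisely for this family; the flaw lies only in adjoining $Y_{t_0}$ for arbitrary real $t_0$ and asserting disjointness that does not hold.
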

\begin{proof}
By the definition of the random integral,
$(Y_t)_{t \in \mathbb R}$ has independent increments because $(X_t)_{t \geq 0}$ has and its paths are almost surely c\`adl\`ag functions. Of course $Y(0)=0$. It remains to check that $(Y_t)_{t \in
\mathbb R}$ is stochastically continuous. For $s,t \in \R$ we have 
$${ Y_t-Y_s} =e^{-t (\alpha - \delta/2 )} X_{e^t} -e^{-s (\alpha - \delta/2 )}
X_{e^s} + \int_{e^s}^{e^t} (\alpha - \delta/2) u^{-(\alpha - \delta/2 )-1} X_u
du,$$ where $e^{-t (\alpha - \delta/2 )} X_{e^t} -e^{-s (\alpha - \delta/2 )}
X_{e^s} \to 0$ in probability as $s \to t$, since $(X_t)_{\geq 0}$ is stochastically continuous,
and the integral converges to zero almost surely as $s\to t$, since the integrand is almost surely bounded on compact sets; see \cite[p.114]{JurMas}. 
\end{proof}
This directly enables us to obtain a random integral representation for additive dilatively stable processes.
\begin{theorem}\label{rirep}
Let $(X_t)_{t\geq0}$ be an additive $(\alpha,\delta)$-dilatively stable process and $(Y_t)_{t \in \mathbb R}$ be the corresponding process given in Lemma \ref{driving}. Then for any $t>0$ we have
$$X_t = \int_{-\infty}^{\log t} e^{u (\alpha-\delta / 2)}\, dY_u\quad\text{ almost surely.}$$
\end{theorem}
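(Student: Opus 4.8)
The plan is to read the asserted formula as the inversion of the relation defining $Y$ in Lemma \ref{driving}, and to carry it out by an exponential change of time followed by the collapse of a nested random integral. Throughout I write $\beta := \alpha - \delta/2$, so that $Y_t = \int_1^{e^t} u^{-\beta}\, dX_u$ and the claim reads $X_t = \int_{-\infty}^{\log t} e^{u\beta}\, dY_u$. I would first establish the identity on a finite interval, namely $\int_{\log s}^{\log t} e^{u\beta}\, dY_u = X_t - X_s$ for $0 < s < t$, and then let $s \downarrow 0$.

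First I would set $Z_v := X_{e^v} - X_1$ for $v \in \R$ and check, exactly as in the proof of Lemma \ref{driving}, that $Z = (Z_v)_{v\in\R}$ is again additive: the increments inherit independence, the paths are c\`adl\`ag because $v \mapsto e^v$ is continuous and increasing, stochastic continuity is preserved under this time change, and $Z_0 = 0$. Applying the change of variables formula with the increasing function $\gamma(v) = e^v$ (so $\gamma(0) = 1$) and integrand $A(w) = w^{-\beta}$ then rewrites the background driving process as a random integral against $Z$,
\[
Y_u = \int_1^{e^u} w^{-\beta}\, dX_w = \int_0^u e^{-v\beta}\, dZ_v ,
\]
which is precisely the representation I want to invert.

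The heart of the argument is the \emph{associativity} of the random integral: once $Y_u = \int_0^u e^{-v\beta}\, dZ_v$, I expect $\int_a^b e^{u\beta}\, dY_u = \int_a^b e^{u\beta}\,e^{-u\beta}\, dZ_v = Z_b - Z_a$, the weights cancelling. To make this rigorous within the pathwise framework I would fix $\omega$ and work with the deterministic, locally bounded c\`adl\`ag path $v \mapsto Z_v(\omega)$. Using the integration-by-parts definition \eqref{ibp} twice, I would first write $Y_u = e^{-u\beta} Z_u + \beta \int_0^u e^{-v\beta} Z_v\, dv$, insert this into $\int_a^b e^{u\beta}\, dY_u = e^{b\beta} Y_b - e^{a\beta} Y_a - \beta \int_a^b e^{u\beta} Y_u\, du$ with $[a,b] = [\log s, \log t]$, and then collapse the resulting iterated ordinary integral by one further integration by parts applied to the $C^1$ primitive $u \mapsto \int_0^u e^{-v\beta} Z_v\, dv$. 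Every weighted term then cancels and one is left with $Z_b - Z_a = X_{e^b} - X_{e^a} = X_t - X_s$. I expect this cancellation to be the main obstacle: it is purely deterministic calculus for each path, but the bookkeeping must be done carefully, and one must note that local boundedness of the paths of $Z$ guarantees that all the ordinary integrals involved exist.

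Finally I would pass to the improper integral by letting $s \downarrow 0$, equivalently $\log s \to -\infty$. Because $X$ has c\`adl\`ag paths and $X_0 = 0$, right-continuity at the origin gives $X_s \to 0$ almost surely, so the finite-interval identity forces $\int_{\log s}^{\log t} e^{u\beta}\, dY_u \to X_t$ almost surely along any sequence $s \downarrow 0$ (a countable union of null sets being negligible). Since this limit is exactly the definition of $\int_{-\infty}^{\log t} e^{u\beta}\, dY_u$, the claimed representation follows; it is worth noting that only stochastic continuity and the c\`adl\`ag property enter this last step.
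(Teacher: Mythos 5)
Your proposal is correct, but it reaches the result by a genuinely different route than the paper, in both of its two steps. For the finite-interval identity $\int_{\log s}^{\log t}e^{u(\alpha-\delta/2)}\,dY_u=X_t-X_s$, the paper substitutes the definition \eqref{Ydef} of $Y$ into the correction term $\int_{\log s}^{\log t}(\alpha-\tfrac{\delta}{2})e^{u(\alpha-\delta/2)}Y_u\,du$ coming from \eqref{ibp} and exchanges the order of integration by Fubini's theorem; you instead rewrite $Y_u=\int_0^u e^{-v\beta}\,dZ_v$ with $Z_v=X_{e^v}-X_1$ (your $\beta=\alpha-\delta/2$) via the change of variables formula and then collapse the nested integral by a second integration by parts. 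The computational content is equivalent and your cancellation does check out (with $F(u)=\int_0^u e^{-v\beta}Z_v\,dv$, the three $F$-terms and the two $\int Z_u\,du$ terms cancel exactly), but note that $F$ is not $C^1$, only Lipschitz with derivative $e^{-u\beta}Z_u$ off the countable set of jump times of $Z$; that is still enough for the ordinary integration by parts, and your argument also covers $\alpha=\delta/2$ for free since every correction term carries a factor $\beta$. The more substantive divergence is the limit $s\downarrow0$: the paper only gets $X_t-X_s\to X_t$ \emph{in probability} from stochastic continuity and must invoke Theorem A2.2 of \cite{JurVer} (for independent increments, convergence in probability is equivalent to almost sure convergence) to produce the improper integral as an a.s.\ limit, whereas you use right-continuity of paths at $0$ together with $X_0=0$. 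One caution about how you phrased this: a.s.\ convergence along each fixed sequence $s_n\downarrow0$, with a null set depending on the sequence, does \emph{not} by itself give a.s.\ existence of the improper integral, which requires the limit over the continuum on a single full-measure event. Your proof survives because it is stronger than your parenthetical suggests: every identity you establish is deterministic pathwise calculus with c\`adl\`ag integrators, so the finite-interval identity holds for \emph{all} $0<s<t$ simultaneously on the single event of full probability where $X$ has c\`adl\`ag paths and $X_0=0$, and on that event the genuine pathwise limit as $s\downarrow0$ exists. Stated this way, your last step needs neither stochastic continuity nor independence of increments, which is a mild gain in elementarity over the paper's appeal to \cite{JurVer}.
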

\begin{proof}
We will first prove a corresponding representation for the increments. By the construction of the random integral we obtain for $0<s<t$ 
$$ \int_{ \log s}^{\log t} e^{u(\alpha - \delta / 2)}\,dY_u=t^{\alpha - \delta /2} Y_{\log t} - s^{\alpha - \delta /2
} Y_{\log s} - \int_{\log s}^{\log t} (\alpha - \tfrac{\delta}{2}) e^{u(\alpha - \delta/ 2)}\,Y_u\, du.$$
For the latter integral we get by \eqref{Ydef}
\begin{align*}
& \int_{\log s}^{\log t} (\alpha - \tfrac{\delta}{2}) e^{u(\alpha - \delta/ 2)}\,Y_u\, du
= \int_{\log s}^{\log t} (\alpha - \tfrac{\delta}{2}) e^{u(\alpha - \delta/ 2)}\int_1^{e^u}v^{-\alpha+\delta/2}\,dX_v\, du\\
& \quad=\int_1^s v^{- \alpha + \delta /2} \int_{\log s}^{\log t}(\alpha - \tfrac{\delta}{2})e^{u(\alpha - \delta / 2)}\,du\,dX_v\\
& \qquad+\int_s^t v^{- \alpha + \delta /2} \int_{\log v}^{\log t}(\alpha - \tfrac{\delta}{2})e^{u(\alpha - \delta / 2)}\,du\,dX_v \\
& \quad= \int_1^s v^{- \alpha + \delta /2}\left(t^{\alpha - \delta /2}-s^{\alpha - \delta /2}\right)\,dX_v
+\int_s^t v^{- \alpha + \delta /2}\left(t^{\alpha - \delta /2}-v^{\alpha - \delta /2}\right)\,dX_v\\ 
& \quad=\left(t^{\alpha - \delta /2}-s^{\alpha - \delta /2}\right) Y_{\log s}+t^{\alpha - \delta /2}\left(Y_{\log t}-Y_{\log s}\right)-\int_s^tdX_v\\
& \quad=t^{\alpha - \delta /2} Y_{\log t} - s^{\alpha - \delta /2
} Y_{\log s}-(X_t-X_s),
\end{align*}
where exchangeability of the order of integration follows by \eqref{ibp} and Fubini's theorem. Together it follows that for $0<s<t$ we have
$$X_t-X_s = \int_{\log s}^{\log t} e^{u (\alpha-\delta / 2)}\, dY_u.$$
By stochastic continuity $X_t-X_{s}\to X_t-X_0=X_t$ in probability as $s\downarrow0$. 
Since the process $X$ has independent increments, convergence in probability is equivalent to almost sure convergence by Theorem A2.2 in \cite{JurVer}.
\end{proof}
We will now show that the backround driving process $(Y_t)_{t\in\R}$ is a time-transformed L\'evy process. Preparatory, we will investigate its increments.
\begin{lemma}\label{incre}
For fixed $T \in \R$ let $ (K_{t;T})_{t \in \R}:=(Y_{t+T} - Y_T)_{t \in \R}$, 
where $Y$ is the process from Lemma \ref{driving} with an additive $(\alpha,\delta)$-dilatively stable process $X$. In terms of the L\'evy exponent this process fulfills
\begin{equation}\label{LeK}
  \Psi_{t_1,\ldots, t_k;T}^K(\theta_1,\ldots,\theta_k) = e^{\delta T}
  \Psi_{t_1,\ldots, t_k}^Y(\theta_1,\ldots,\theta_k)
\end{equation}
for all $k\in\nat$ and $t_1,\ldots,t_k,\theta_1,\ldots,\theta_k\in\R$, where $\Psi_{t_1,\ldots, t_k;T}^K$ denotes the log-characteristic function of $(K_{t_1;T},\ldots,K_{t_k;T})$.
\end{lemma}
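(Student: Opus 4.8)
The plan is to recognise the increment process $K$ as the very same construction of Lemma \ref{driving}, but driven by a time-dilated copy of $X$, and then to let dilative stability of $X$ produce the prefactor $e^{\delta T}$.

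First I would rewrite $K_{t;T}$ as a random integral against a dilated driving process. For $t>0$ formula \eqref{Ydef} gives $K_{t;T}=Y_{t+T}-Y_T=\int_{e^T}^{e^{t+T}}u^{-\alpha+\delta/2}\,dX_u$, and the change of variables formula applied to the increasing map $\gamma(s)=e^T s$ (for which $\gamma(0)=0$ and $X_{\gamma(0)}=0$) turns this into
\[
K_{t;T}=\int_1^{e^t}(e^T s)^{-\alpha+\delta/2}\,dX_{e^T s}=e^{-(\alpha-\delta/2)T}\int_1^{e^t}s^{-\alpha+\delta/2}\,d\widehat X_s,
\]
where $\widehat X_s:=X_{e^T s}$ is again additive; the same identity holds for $t<0$ after reversing the integration limits. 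Denoting by $\widehat Y_t:=\int_1^{e^t}s^{-\alpha+\delta/2}\,d\widehat X_s$ the Lemma \ref{driving} process attached to $\widehat X$, this reads $K_{t;T}=e^{-(\alpha-\delta/2)T}\,\widehat Y_t$, hence on the level of L\'evy exponents
\[
\Psi^K_{t_1,\ldots,t_k;T}(\theta_1,\ldots,\theta_k)=\Psi^{\widehat Y}_{t_1,\ldots,t_k}\big(e^{-(\alpha-\delta/2)T}\theta_1,\ldots,e^{-(\alpha-\delta/2)T}\theta_k\big).
\]

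Next I would feed in dilative stability. Since $\widehat X_s=X_{e^T s}$, relation \eqref{DSdefi} with dilation parameter $e^T$ yields for all $k$, $s_1,\ldots,s_k$ and $\phi_1,\ldots,\phi_k$
\[
\Psi^{\widehat X}_{s_1,\ldots,s_k}(\phi_1,\ldots,\phi_k)=\Psi^{X}_{e^T s_1,\ldots,e^T s_k}(\phi_1,\ldots,\phi_k)=e^{\delta T}\,\Psi^{X}_{s_1,\ldots,s_k}\big(e^{(\alpha-\delta/2)T}\phi_1,\ldots,e^{(\alpha-\delta/2)T}\phi_k\big).
\]
The core step is to carry this through the random integral. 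Approximating $(\widehat Y_{t_1},\ldots,\widehat Y_{t_k})$ by Riemann--Stieltjes sums over a common refining partition $u_0<\ldots<u_N$ of the relevant range, the quantity $\sum_j\theta_j\widehat Y_{t_j}$ becomes a finite linear combination $\sum_i c_i(\widehat X_{u_{i+1}}-\widehat X_{u_i})=\sum_m\eta_m\widehat X_{u_m}$ whose coefficients are fixed multiples of $u_i^{-\alpha+\delta/2}$ and depend \emph{linearly} on $(\theta_1,\ldots,\theta_k)$, with no dependence on $T$. By additivity and almost sure (hence distributional) convergence of these sums, the joint L\'evy exponent of the random integral is the limit of the expressions $\Psi^{\widehat X}_{u_0,\ldots,u_N}(\eta_0,\ldots,\eta_N)$. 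Inserting the previous display and using that scaling each $\theta_j$ by $e^{(\alpha-\delta/2)T}$ scales each $\eta_m$ by the same factor, the constant $e^{\delta T}$ factors out of the limit and I obtain
\[
\Psi^{\widehat Y}_{t_1,\ldots,t_k}(\theta_1,\ldots,\theta_k)=e^{\delta T}\,\Psi^{Y}_{t_1,\ldots,t_k}\big(e^{(\alpha-\delta/2)T}\theta_1,\ldots,e^{(\alpha-\delta/2)T}\theta_k\big).
\]
Combining this with the first paragraph, the two spatial scalings $e^{(\alpha-\delta/2)T}$ and $e^{-(\alpha-\delta/2)T}$ cancel and \eqref{LeK} follows.

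I expect the genuine obstacle to lie in this last step: justifying rigorously that the L\'evy-exponent functional $\Psi^X\mapsto\Psi^Y$ induced by the random integral commutes with the spatial scaling. This means making the Riemann--Stieltjes limit precise (including the reversed limits for negative $t_j$ and the Abel-summation bookkeeping that rewrites $\sum_j\theta_j\widehat Y_{t_j}$ as a sum over independent increments of $\widehat X$), and verifying that the linear $\theta$-dependence of the coefficients is uniform enough to interchange the scaling with the limit. Everything else is a direct substitution.
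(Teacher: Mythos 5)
Your proposal is correct, and it reaches \eqref{LeK} by a route that is organized quite differently from the paper's, even though the basic ingredients (Riemann--Stieltjes approximation of the random integral, the scaling relation \eqref{DSdefi} applied at partition points, and L\'evy's continuity theorem) are the same. The paper first proves the one-dimensional case $k=1$ for $t>0$ by exactly the kind of Riemann--Stieltjes computation you describe, then handles $t<0$ by the reflection identity $K_{t;T}=-K_{-t;t+T}$, and finally bootstraps to $k\ge 2$ by an induction that exploits independence of increments of $Y$; a pleasant consequence of that organization is that only \emph{bivariate} marginals of $X$, namely $\Psi^X_{s_{j+1},s_j}(\theta,-\theta)$, ever enter the computation. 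You instead make the structural observation that $K_{t;T}=e^{-(\alpha-\delta/2)T}\,\widehat Y_t$, where $\widehat Y$ is the Lemma \ref{driving} process built from the dilated additive process $\widehat X_s=X_{e^Ts}$, and then push \eqref{DSdefi} through a \emph{joint} Riemann--Stieltjes approximation over a common partition; this treats all $k$ coordinates and both signs of the $t_j$ at once, eliminating both the induction and the separate reflection argument, at the price of heavier bookkeeping (general finite-dimensional marginals of $X$, and the Abel-summation rewriting of $\sum_j\theta_j\widehat Y_{t_j}$ as a linear form in the $\widehat X_{u_m}$, with coefficients linear in $\theta$ and free of $T$ -- both of which you correctly identify and which do go through). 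Finally, the obstacle you single out -- interchanging the limit with the multiplication of the L\'evy exponent by $e^{\delta T}$ -- is not specific to your route: the paper's proof performs the identical step when it pulls the exponent $e^{\delta T}$ outside the limit of the product of characteristic functions. In both cases it is justified by the standard fact that the approximating sums are infinitely divisible (being linear combinations of independent increments of an additive process) and that weak convergence of infinitely divisible laws entails convergence of their distinguished logarithms, so the constant factors out of the limit legitimately.
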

\begin{proof}
Let us first prove that $ \Psi_{t;T}^K(\theta) = e^{\delta T}\Psi_{t}^Y(\theta)$ for $t,\theta\in\R$.  { In case $t=0$ there is nothing to prove.} By definition of $ K_{t;T}$ we get
$$K_{t;T} =  Y_{t+T} - Y_{T} = \int_{e^T}^{e^{t+T}} u^{-\alpha + \delta /2 } dX_{u} = \int_{1}^{e^{t } }
 (ue^T)^{-\alpha + \delta /2 } dX_{ue^T}.$$
In case $t>0$ this gives us the following approximation by Riemann-Stieltjes sums for a sequence of
partitions $ 1 =s_0^{(n)}= t_0^{(n)} \leq s_1^{(n)} < t_1^{(n)} \leq \ldots \leq s_n^{(n)} < t_n^{(n)}=s_{n+1}^{(n)} = e^t$ 
\begin{align*}
&K_{t;T} =\int_1^{e^t} (ue^T)^{-\alpha + \delta /2 } dX_{ue^T}= \lim_{n \to \infty} \sum_{j=0}^{n} (t_j^{(n)} e^T)^{-\alpha + \delta /2 }
\left( X_{s_{j+1}^{(n)} e^T} - X_{s_{j}^{(n)} e^T} \right).
\end{align*}
To derive the Fourier transforms we use the following property
\begin{align*}
\widehat{P}_{X_{t_1}-X_{t_2}}(\theta) & = \mathbb E\big[ e^{i\theta (X_{t_1} - X_{t_2}  )} \big] 
=\int_{\R^2}  e^{i\theta (x_{1} - x_{2}  )} dP_{(X_{t_1},X_{t_2})}(x_1,x_2) \\
& = \widehat{P}_{(X_{t_1},X_{t_2})}(\theta ,-\theta ) = \exp\big(\Psi_{t_1,t_2}^X(\theta ,-\theta )\big).
\end{align*}
for all $t_1,t_2,\theta\in\R$. By L\'evy's continuity theorem we obtain for $\theta\in\R$
\begin{align*}
\exp\big(\Psi_{t;T}^K(\theta)\big) & = \widehat P_{K_{t;T}}(\theta) \\
&=\lim_{n\to\infty}\prod_{j=0}^n \exp \left( \Psi^X_{s_{j+1}^{(n)}e^T,s_{j}^{(n)}e^T} \left(
(t_j^{(n)} e^T)^{-\alpha + \delta /2 }\theta  , - (t_j^{(n)} e^T)^{-\alpha + \delta /2
} \theta  \right) \right) \\
&=\lim_{n\to\infty}\exp \left( \sum_{j=0}^{n} e^{\delta T} \Psi^X_{s_{j+1}^{(n)},s_{j}^{(n)}}
\left( (t_j^{(n)})^{-\alpha + \delta /2 }\theta  , - (t_j^{(n)})^{-\alpha + \delta /2} \theta 
\right) \right) \\
& = \left(\lim_{n\to\infty}\prod_{j=0}^{n}  \exp \left(  \Psi^X_{s_{j+1}^{(n)},s_{j}^{(n)}}
\left( (t_j^{(n)})^{-\alpha + \delta /2 }\theta  , - (t_j^{(n)})^{-\alpha + \delta /2} \theta 
\right) \right) \right)^{e^{\delta T}} \\
& =\left(\widehat P_{Y_{t}}(\theta )\right)^{e^{\delta T}} 
= \exp(e^{\delta T}\Psi_{t}^Y(\theta )),
\end{align*}
where in the third line we used the scaling property \eqref{DSdefi} of the dilatively stable
processes $X$. In conclusion $\Psi_{t;T}^K(\theta) = e^{\delta T}\Psi_{t}^Y(\theta)$ for $t>0$ and $\theta\in\R$. The same property holds for $t<0$ and $\theta\in\R$, since
$$K_{t;T}=Y_{t+T}-Y_T=-(Y_{(t+T)-t}-Y_{t+T})=-K_{-t;t+T}$$
and hence by the above we get
$$\Psi_{t;T}^K(\theta)=\Psi_{-t;t+T}^K(-\theta)=e^{\delta(t+T)}\Psi_{-t}^Y(-\theta)=e^{\delta T}\Psi_{-t;t}^K(-\theta)=e^{\delta T}\Psi_{t}^Y(\theta),$$
where the last equality follows by $K_{-t;t}=Y_0-Y_t=-Y_t$. Finally, we will prove \eqref{LeK} for $k=2$, the general case $k\in\nat$ follows inductively. Without loss of generality let $t_1<t_2$ then for $\theta_1,\theta_2\in\R$ by independence of the increments we have
\begin{align*}
\exp\left(\Psi^K_{t_1,t_2;T}(\theta_1,\theta_2)\right) &  = \E \big[\exp (i \theta_1 K_{t_1;T} + i \theta_2 K_{t_2;T} ) \big] \\ 
 & = \exp \left( \Psi^{K}_{t_1;T}(\theta_1 +\theta_2) \right) \exp \left( \Psi^{K}_{t_2-t_1;t_1+T}(\theta_2)\right) \\ 
 & = \exp \left( e^{\delta T} \Psi^{Y}_{t_1}(\theta_1 +\theta_2) \right) \exp \left( e^{\delta(t_1+ T)}  \Psi^{Y}_{t_2-t_1}(\theta_2)\right) \\
 & = \exp \left( e^{\delta T} \Psi^{Y}_{t_1}(\theta_1 +\theta_2) \right) \exp \left( e^{\delta T}  \Psi^{K}_{t_2-t_1;t_1}(\theta_2)\right) \\
 &=  \left( \E \big[\exp (i (\theta_1+\theta_2) Y_{t_1} )\big] \E \big[\exp (i \theta_2 (Y_{t_2}-Y_{t_1} ) )\big] \right)^{e^{\delta T}} \\ 
 &= \left( \E \big[\exp (i \theta_1 Y_{t_1} + i \theta_2 Y_{t_2} ) \big]\right)^{e^{\delta T}} = \exp\left(e^{\delta T}\Psi^Y_{t_1,t_2}(\theta_1,\theta_2)\right)\end{align*}
concluding the proof.
\end{proof}
Let $Y$ be an infinitely divisible random variable. We say that $(L(t))_{t\in\R}$ is the {\it two-sided L\'evy process} generated by the law of $Y$ if it can be represented as 
$$L(t) =  \begin{cases}
 L^{(1)}(t)   & \text{ if } t \geq 0 \\ 
  - L^{(2)}((-t)-) & \text{ if } t < 0 
 \end{cases} $$
with independent copies $(L^{(1)}(t))_{t\geq0},\,(L^{(2)}(t))_{t\geq0}$ of the L\'evy process generated by the law of $Y$. Note that $(L(t))_{t\in\R}$ has c\`adl\`ag paths.
\begin{lemma}\label{ttLP}
Let $X=(X_t)_{t \geq 0}$ be an additive $(\alpha,\delta)$-dilatively stable process. Then the background driving process $Y=(Y_t)_{t \in \R}$ from Lemma \ref{driving} is the time-changed process 
$$(Y_t)_{t\in\R} \stackrel{\rm d}{=} \big(L(\tfrac{e^{\delta t}-1}{e^{ \delta} - 1})\big)_{t\in\R},$$
where $(L(t))_{t\in\R}$ is the {\it two-sided L\'evy process} generated by the law of $Y_1$ and $\stackrel{\rm d}{=}$ denotes equality in distribution. Note that for $\delta = 0 $ the time-change function is simply defined by $\lim_{\delta \to 0} \big(
(e^{\delta t}-1)/(e^{\delta} - 1)\big)= t$. 
\end{lemma}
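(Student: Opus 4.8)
The plan is to exhibit $Y$ and the time-changed process $\tilde Y:=(L(\tau(t)))_{t\in\R}$, with $\tau(t):=(e^{\delta t}-1)/(e^{\delta}-1)$, as two additive processes sharing the same finite-dimensional distributions. First I would record that $\tau$ is continuous and strictly increasing on $\R$ with $\tau(0)=0$, $\tau(1)=1$, and $\operatorname{sgn}\tau(t)=\operatorname{sgn} t$ for both signs of $\delta$; this lets $\tilde Y$ inherit independent increments, stochastic continuity and c\`adl\`ag paths from $L$, with $\tilde Y_0=L(0)=0$, so that $\tilde Y$ is additive. Since the law of an additive process is determined by its increment distributions, it would then suffice to verify $Y_t-Y_s\eqd \tilde Y_t-\tilde Y_s$ for all $s<t$, i.e.\ that the two L\'evy exponents agree.

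The key step would be to compute $\Psi^Y_t(\theta)$ explicitly. Fixing $\theta\in\R$ and writing $f(t):=\Psi^Y_t(\theta)$, I would use the independence of the increments of $Y$ in the two decompositions $Y_{s+t}=Y_s+K_{t;s}$ and $Y_{s+t}=Y_t+K_{s;t}$, so that Lemma \ref{incre} yields
\begin{equation*}
 f(s+t)=f(s)+e^{\delta s}f(t)=f(t)+e^{\delta t}f(s).
\end{equation*}
Equating the last two expressions gives $f(s)(1-e^{\delta t})=f(t)(1-e^{\delta s})$, and for $\delta\neq0$ the special choice $s=1$ solves this functional equation in closed form as $f(t)=\tau(t)\,f(1)=\tau(t)\,\Psi^{Y_1}(\theta)$. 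Hence $\Psi^Y_t(\theta)=\tau(t)\,\Psi^{Y_1}(\theta)$ for every $t\in\R$.

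With this in hand I would match the increment exponents on both sides. Because $(L(t))_{t\in\R}$ is generated by the law of $Y_1$, one has $\Psi^{L(r)}(\theta)=r\,\Psi^{Y_1}(\theta)$ for $r\ge0$, and the two-sided construction together with infinite divisibility across the origin gives $\Psi^{\tilde Y_t-\tilde Y_s}(\theta)=(\tau(t)-\tau(s))\,\Psi^{Y_1}(\theta)$. For $Y$ itself I would write $Y_t-Y_s=K_{t-s;s}$ and combine Lemma \ref{incre} with the formula just derived to obtain
\begin{equation*}
 \Psi^{Y_t-Y_s}(\theta)=e^{\delta s}\Psi^Y_{t-s}(\theta)=e^{\delta s}\,\tau(t-s)\,\Psi^{Y_1}(\theta)=(\tau(t)-\tau(s))\,\Psi^{Y_1}(\theta),
\end{equation*}
using $e^{\delta s}\tau(t-s)=\tau(t)-\tau(s)$. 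The two exponents coincide, all increments of $Y$ and $\tilde Y$ agree in distribution, and the claim follows.

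The case $\delta=0$ I would treat separately, where it is in fact immediate: Lemma \ref{incre} then asserts stationarity of the increments of $Y$, so $Y$ is already a two-sided L\'evy process generated by $Y_1$, matching the claim with the limiting time change $\tau(t)=t$. The step I expect to require the most care is the two-sided bookkeeping of $L$ near the origin: an increment $\tilde Y_t-\tilde Y_s$ straddling $0$ has to be decomposed into two independent pieces from the copies $L^{(1)},L^{(2)}$ and recombined through infinite divisibility, and the sign analysis of $\tau$ is what guarantees this is consistent simultaneously for $\delta>0$ and $\delta<0$.
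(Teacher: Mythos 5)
Your proposal is correct, and its core is genuinely different from the paper's argument. Where the paper obtains the one-dimensional marginals from the discrete decomposition $Y_{Nt}=\sum_{n=0}^{N-1}K_{t;nt}$ --- which gives the scaling relation only at rational times and must then be extended to all reals via stochastic continuity, with a separate reflection argument ($K_{-t;t+T}=-K_{t;T}$-type identities) for negative times --- you solve the problem in closed form at once: decomposing $Y_{s+t}$ in two ways and applying Lemma \ref{incre} gives the functional equation $f(s)+e^{\delta s}f(t)=f(t)+e^{\delta t}f(s)$ for $f(t)=\Psi^Y_t(\theta)$, whence $f(t)=\tau(t)f(1)$ with $\tau(t)=\tfrac{e^{\delta t}-1}{e^{\delta}-1}$, with no rational approximation and no continuity argument. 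Similarly, for the finite-dimensional distributions the paper carries out an explicit $k=2$ characteristic-function computation, while you invoke the general (and valid) principle that two independent-increment processes vanishing at $0$ whose increments agree in law have the same finite-dimensional distributions; this reduces everything to the single algebraic identity $e^{\delta s}\tau(t-s)=\tau(t)-\tau(s)$. Your treatment of $\delta=0$ (stationary independent increments directly from Lemma \ref{incre}) is also more self-contained than the paper's appeal to the known selfsimilar case. What the paper's route buys in exchange is that every step is an explicit computation with characteristic functions, whereas you rely on the unproved (though standard) determination-by-increments principle and on the two-sided bookkeeping for $L$, which you rightly flag as the delicate point.

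One intermediate claim should be corrected: the conclusion ``$\Psi^Y_t(\theta)=\tau(t)\Psi^{Y_1}(\theta)$ for every $t\in\R$'' is only justified, and in fact only true, for $t>0$. Your functional equation uses independence of $Y_s$ and $K_{t;s}=Y_{s+t}-Y_s$, which holds when the intervals $[0,s]$ and $[s,s+t]$ are disjoint, i.e.\ for $s,t>0$; for $t<0$ the intervals overlap and the argument breaks down. Indeed, for $t<0$ the correct formula (consistent with the paper's \eqref{bdp2}) is $\Psi^Y_t(\theta)=-\tau(t)\,\Psi^{Y_1}(-\theta)$, and $\tau(t)\Psi^{Y_1}(\theta)$ with $\tau(t)<0$ is in general not even a L\'evy exponent unless $Y_1$ is degenerate. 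The slip is harmless for your proof, since every later step evaluates $\Psi^Y$ only at the positive argument $t-s$, but the assertion should be restricted to $t>0$.
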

\begin{proof}
In case $\delta=0$ the process $X$ is a selfsimilar additive process and it is well known that the corresponding background driving process is a L\'evy process. Thus we will only prove the case $\delta\not=0$. For $N\in\nat$ and $n=0,\ldots, N-1$, setting $T=nt$ in Lemma \ref{incre} we get for any $t\in\R$
\begin{align*}
  Y_{Nt} = \sum_{n=0}^{N-1} Y_{t+nt} - Y_{nt} &= \sum_{n=0}^{N-1} K_{t;nt}.
\end{align*}
which by independence of the increments implies for the L\'evy exponents
$$\Psi_{Nt}^Y(\theta) = \sum_{n=0}^{N-1} e^{\delta n t} \Psi_t^Y(\theta) = 
\frac{e^{\delta N t} - 1 }{e^{\delta t } -1 } \Psi_t^Y(\theta)\quad\text{ for any }\theta\in\R.$$
Setting $t=1/N$ it follows that
$\Psi_{1}^Y(\theta) = \frac{e^{\delta } - 1 }{e^{\delta/N} -1} \Psi_{1/N}^Y(\theta)$
for any $N\in\nat$ and setting $t=1/m$ with $m\in\nat$ we get
$$\Psi_{N/m}^Y(\theta) = \frac{e^{\delta N/m} -1 }{e^{\delta/m} - 1 }  \Psi_{1/m}^Y(\theta)
 = \frac{e^{\delta N/m} -1 }{e^{\delta/m } - 1 } \cdot \frac{e^{\delta/m } -1 }{e^{\delta } - 1 } \Psi_{1}^Y(\theta) =\frac{e^{\delta N/m } -1 }{e^{\delta } - 1 }\Psi_{1}^Y(\theta).$$
Due to the stochastic continuity of $(Y_{t})_{t\in \mathbb R }$ we get 
\begin{equation}\label{bdp1}
Y_t\stackrel{\rm d}{=}L^{(1)}(\tfrac{e^{\delta t } -1 }{e^{\delta } - 1 })\quad\text{ for any }t\geq0.
\end{equation}
The same arguments for $t=-1/m$ with $m\in\nat$ together with Lemma \ref{incre} implies
\begin{align*}
\Psi_{-N/m}^Y(\theta)& = \frac{e^{-\delta N/m} -1 }{e^{-\delta/m} - 1 }  \Psi_{-1/m}^Y(\theta)
 = \frac{e^{-\delta N/m} -1 }{e^{-\delta/m } - 1 } \cdot \frac{e^{-\delta/m } -1 }{e^{-\delta } - 1 } \Psi_{-1}^Y(\theta)\\
& =-\frac{e^{-\delta N/m } -1 }{e^{\delta } - 1 }\,e^{\delta}\Psi_{-1}^Y(\theta)=-\frac{e^{-\delta N/m } -1 }{e^{\delta } - 1 }\Psi_{-1;1}^K(\theta)\\
& =-\frac{e^{-\delta N/m } -1 }{e^{\delta } - 1 }\Psi_{1}^Y(-\theta),
\end{align*}
where the last equality follows since $K_{-1;1}=Y_0-Y_1=-Y_1$. Hence, again due to the stochastic continuity of $(Y_{t})_{t\in \mathbb R }$ we get 
\begin{equation}\label{bdp2}
Y_t\stackrel{\rm d}{=}-L^{(2)}(-\tfrac{e^{\delta t } -1 }{e^{\delta } - 1 })\stackrel{\rm d}{=}-L^{(2)}((-\tfrac{e^{\delta t } -1 }{e^{\delta } - 1 })-)\quad\text{ for any }t<0.
\end{equation}
Combining \eqref{bdp1} and \eqref{bdp2} we get
\begin{equation}\label{bdp3}
Y_t\stackrel{\rm d}{=}L(\tfrac{e^{\delta t } -1 }{e^{\delta } - 1 })\quad\text{ for any }t\in\R
\end{equation}
and it remains to show that 
\begin{align*}
\widehat P_{\left(Y_{t_1},\ldots,Y_{t_k}\right)}(\theta_1,\ldots,\theta_k) = \widehat
P_{\big(L(\frac{e^{\delta t_1}-1}{e^{\delta}-1}) ,\ldots,L(\frac{e^{\delta
t_k}-1}{e^{\delta}-1})\big)}(\theta_1,\ldots,\theta_k)
\end{align*}
for all $k\in\nat$ and $t_1,\ldots,t_k,\theta_1,\ldots,\theta_k \in \R$. It suffices to prove the assertion for $k=2$ and $t_1<t_2$, the general case follows analogously. By independence of the increments of $(Y_t)_{t\in\R}$, Lemma \ref{incre} and \eqref{bdp3} we get
 \begin{align*}
\widehat{P}_{(Y_{t_1}, Y_{t_2})}(\theta_1,\theta_2) & = \widehat{P}_{(Y_{t_1}, Y_{t_2}-Y_{t_1})}(\theta_1+\theta_2,\theta_2)\\ 
& = \widehat{P}_{Y_{t_1}}(\theta_1+\theta_2)\cdot\widehat{P}_{Y_{t_2}-Y_{t_1}}(\theta_2)\\
& =\exp\left(\Psi^Y_{t_1}(\theta_1+\theta_2)\right)\cdot\exp\left(\Psi^K_{t_2-t_1;t_1}(\theta_2)\right)\\
& =\exp\left(\Psi^Y_{t_1}(\theta_1+\theta_2)\right)\cdot\exp\left(e^{\delta t_1}\Psi^Y_{t_2-t_1}(\theta_2)\right)\\
& =\widehat{P}_{L\left(\frac{e^{\delta t_1}-1}{e^{ \delta} - 1}\right)}(\theta_1+\theta_2) \cdot\widehat{P}_{L\big(e^{\delta t_1}\frac{e^{\delta(t_2-t_1)}-1}{e^{\delta} - 1}\big)}(\theta_2).
 \end{align*}
 Since the two-sided L\'evy process $(L(t))_{t\in\R}$ has stationary and independent increments we further get
 \begin{align*}
 \widehat{P}_{(Y_{t_1}, Y_{t_2})}(\theta_1,\theta_2)  & =\widehat{P}_{L( \frac{e^{\delta t_1}-1}{e^{ \delta} - 1})}(\theta_1+\theta_2) \cdot\widehat{P}_{L(\frac{e^{\delta t_2}-1}{e^{ \delta} - 1})-L( \frac{e^{\delta t_1}-1}{e^{ \delta} - 1})}(\theta_2)\\
 &=\widehat{P}_{\big(L(\frac{e^{\delta t_1}-1}{e^{ \delta} - 1}), L(\frac{e^{\delta t_2}-1}{e^{ \delta} - 1})\big)}(\theta_1,\theta_2)
\end{align*}
concluding the proof.
\end{proof}
Combining Theorem \ref{rirep} and Lemma \ref{ttLP} we immediately get the following representation in law.
\begin{cor}\label{rirepd}
Let $(X_t)_{t\geq0}$ be an additive $(\alpha,\delta)$-dilatively stable process and $(L(t))_{t \in \mathbb R}$ be the two-sided L\'evy process generated by the law of $Y_1=\int_1^e u^{-\alpha+\delta/2}\,dX_u$ from Lemma \ref{driving}. Then we have
$$(X_t)_{t\geq0} \stackrel{\rm d}{=} \Big(\int_{-\infty}^{\log t} e^{u (\alpha-\delta / 2)}\, dL( \tfrac{e^{\delta u}-1}{e^{ \delta} - 1})\Big)_{t\geq0}.$$
\end{cor}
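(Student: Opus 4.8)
The plan is to read the claim as a direct composition of the two preceding results: Theorem~\ref{rirep} identifies the process $X$ pathwise with a random integral driven by $Y$, while Lemma~\ref{ttLP} identifies the law of the driving process $Y$ with that of the time-changed two-sided L\'evy process $\tilde L=(L(\tfrac{e^{\delta t}-1}{e^{\delta}-1}))_{t\in\R}$. What has to be supplied is only the principle that replacing the background driving process by another process of the same law leaves the law of the resulting integral process unchanged.

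First I would fix $t_1,\ldots,t_k>0$ and, using Theorem~\ref{rirep}, write $(X_{t_1},\ldots,X_{t_k})=(\int_{-\infty}^{\log t_j}e^{u(\alpha-\delta/2)}\,dY_u)_{j=1,\ldots,k}$ almost surely. For a fixed finite lower endpoint $a$, the integration-by-parts formula \eqref{ibp} exhibits $\int_a^{\log t_j}e^{u(\alpha-\delta/2)}\,dY_u$ as a measurable functional of the c\`adl\`ag path $(Y_u)_{u\in[a,\,\max_j\log t_j]}$, namely boundary values plus a pathwise Riemann integral of that path, which in turn is an almost sure limit of Riemann--Stieltjes sums over rational partitions. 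Since $Y$ and $\tilde L$ are both c\`adl\`ag and share all finite-dimensional distributions by Lemma~\ref{ttLP}, they induce the same law on path space; hence these functionals of $Y$ and of $\tilde L$ agree in joint distribution across $j=1,\ldots,k$ and across finitely many endpoints $a$.

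Next I would pass to the improper lower limit $a\to-\infty$. In the proof of Theorem~\ref{rirep} this limit exists almost surely for the $Y$-driven integral, since $\int_a^{\log t_j}e^{u(\alpha-\delta/2)}\,dY_u=X_{t_j}-X_{e^a}\to X_{t_j}$ as $e^a\downarrow 0$. The event that $a\mapsto\int_a^{\log t_j}e^{u(\alpha-\delta/2)}\,dY_u$ converges as $a\to-\infty$, formulated via a Cauchy criterion along rational $a$, is measurable with respect to the finite-dimensional distributions of the driving process, so equality of the two path-space laws forces the same almost sure convergence for the $\tilde L$-driven integral. This yields $(\int_{-\infty}^{\log t_j}e^{u(\alpha-\delta/2)}\,d\tilde L_u)_{j=1,\ldots,k}\eqd(\int_{-\infty}^{\log t_j}e^{u(\alpha-\delta/2)}\,dY_u)_{j=1,\ldots,k}=(X_{t_1},\ldots,X_{t_k})$, and as $t_1,\ldots,t_k$ were arbitrary all finite-dimensional distributions match, giving the asserted equality in law of the two processes.

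I expect the only genuine subtlety to be the treatment of the improper integral at $-\infty$: one must ensure that the almost sure existence of this limit, established for $Y$ in Theorem~\ref{rirep}, is a distributional property that transfers to $\tilde L$, rather than re-proving convergence from scratch for the L\'evy-driven integral. Everything else is the routine observation that a measurable functional of a background driving process depends on that process only through its law.
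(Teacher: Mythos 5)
Your proposal is correct and follows exactly the paper's route: the paper proves this corollary simply by ``combining Theorem~\ref{rirep} and Lemma~\ref{ttLP}'', and your argument just makes explicit the routine justification behind that combination, namely that the random integral (including the improper limit at $-\infty$) is a measurable functional of the c\`adl\`ag driving path, so the equality of finite-dimensional distributions from Lemma~\ref{ttLP} transfers both the almost sure existence of the limit and the law of the resulting process. The care you take with the Cauchy criterion along rational endpoints is a legitimate filling-in of what the paper leaves implicit, not a deviation from its method.
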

\begin{remark}\label{MaeUeda}
In case $\alpha=\delta/2$ our representation in Theorem \ref{rirep} is trivial and we can only deduce that $(X_t)_{t\geq0}=(Y_{\log t}-Y_{-\infty})_{t\geq0}$, where $Y_{-\infty}$ exists almost surely due to stochastic continuity. By Corollary \ref{rirepd} we further get 
$$(X_t)_{t\geq0}\stackrel{\rm d}{=}(L(\tfrac{t^{\delta}-1}{e^\delta-1})-L(\tfrac{-1}{e^\delta-1}))_{t\geq0}\stackrel{\rm d}{=}L(\tfrac{t^{\delta}}{e^\delta-1})_{t\geq0}\quad\text{ provided }\delta>0.$$
Nevertheless, Maejima and Ueda \cite{MU2} provide an integral representation in case $\alpha=\delta/2$ as follows. Writing $\alpha=-\gamma/2$ and hence $\delta=-\gamma$ for $\gamma\in\R$, an additive $(-\gamma/2,-\gamma)$-dilatively stable process $(X_t)_{t\geq0}$ is $\gamma${\it -quasi-selfsimilar}, which by Definition 1.3 in \cite{MU2} means that the L\'evy exponent fulfills
$$\Psi_{Tt}^X(\theta)=T^{-\gamma}\Psi_t^X(T\theta)\quad\text{ for all }t\geq0,\,T>0,\,\theta\in\R.$$
This has the following interesting consequence. If $\gamma<0$ then by Theorem 2.2(II)(i) in \cite{MU2} the {\it Lamperti-type transform} $(Z_t=(1-\gamma t)^{1/\gamma}X_{(1-\gamma t)^{-1/\gamma}})_{t\geq0}$ is a $\gamma$-{\it mild OU-type process}, which by Definition 1.2(i) in \cite{MU2} means that
$$Z_t=(1-\gamma t)^{1/\gamma}\int_{1/\gamma}^t(1-\gamma u)^{-1/\gamma}\,Y(du),$$
where $Y$ denotes an independently scattered random measure. In conclusion we get
$$X_t=\int_{1/\gamma}^{\frac1\gamma(1+t^{-\gamma})}(1-\gamma u)^{-1/\gamma}\,Y(du)=\int_{-\infty}^{\log t}e^u\,Y(d\varphi(u)),$$
where the last equality applies by formal change of variables $\varphi(\log t)=\tfrac1\gamma(1+t^{-\gamma})$. Similar representations hold for $0<\gamma<2$ by Theorem 2.2(II) together with Definition 1.2 in \cite{MU2} if certain moment conditions on $Y$ are fulfilled. Note that by Theorem 2.3(II) in \cite{MU2} a further connection between $\gamma$-quasi-selfsimilar processes and $\gamma$-selfdecomposable random variables is outlined. The latter fulfill an integral representation by \cite{MU1}.

For $\alpha\not=\delta/2$ we will investigate a Lamperti-type transformation of dilatively stable processes and its connection to OU-type processes in Section 3.
\end{remark}
We now turn to the converse relation of constructing additive dilatively stable processes as random integrals with respect to a time-changed L\'evy process. As mentioned in the Introduction, for additive selfsimilar processes (case $\delta=0$) the driving L\'evy process must have a finite logarithmic moment. Since the desired converse relation for additive selfsimilar processes is already fully established in the mathematical literature, we concentrate on the case $\delta\not=0$. For $\delta<0$ we will need the following moment condition for which we were not able to find a suitable reference.
\begin{lemma}\label{sconv}
 Let  $(X_n)_{n \in \mathbb N}$ be an i.i.d.\ sequence. Then for $a,b \in \mathbb N$ with $a \geq 2$ and $\beta > 1$ we have
$$\sum_{k=0}^{\infty} a^{-k \beta} \sum_{\ell= 1}^{a^kb} X_\ell\quad\text{ converges absolutely almost surely iff }\quad
\E \left[| X_{1} |^{1/\beta}\right] < \infty.$$
\end{lemma}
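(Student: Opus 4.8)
The plan is to write $S_n=\sum_{\ell=1}^n X_\ell$, so that absolute convergence of the series means $\Sigma:=\sum_{k=0}^\infty a^{-k\beta}|S_{a^kb}|<\infty$ almost surely, and then to compare $\Sigma$ from above and from below with sums of \emph{independent} terms, to which the Kolmogorov three-series theorem and the two Borel--Cantelli lemmas apply. The pivot of the whole argument is the elementary tail criterion
\[
\E|X_1|^{1/\beta}<\infty \iff \sum_{\ell=1}^\infty P(|X_1|>\varepsilon\ell^\beta)<\infty \iff \sum_{j=0}^\infty a^j\,P(|X_1|>\varepsilon a^{j\beta})<\infty,
\]
valid for every $\varepsilon>0$: the first equivalence is the layer-cake formula for the moment $\E|X_1|^{1/\beta}$, and the second is obtained by grouping the indices $\ell$ into the blocks $(a^{j-1}b,a^jb]$, on each of which $\ell^\beta$ is comparable to $a^{j\beta}$ and whose cardinality is comparable to $a^j$.

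For the ``if'' direction I would use only the triangle inequality. Interchanging the two summations gives $\Sigma\le\sum_{\ell=1}^\infty c_\ell|X_\ell|$ with $c_\ell=\sum_{k:\,a^kb\ge\ell}a^{-k\beta}$, and summing this geometric series shows that $c_\ell$ is comparable to $\ell^{-\beta}$. Since the $c_\ell|X_\ell|$ are independent and nonnegative, Kolmogorov's three-series theorem reduces $\sum_\ell c_\ell|X_\ell|<\infty$ a.s.\ to the convergence of $\sum_\ell P(c_\ell|X_\ell|>1)$ and of $\sum_\ell\E[c_\ell|X_\ell|\,\mathbf 1_{\{c_\ell|X_\ell|\le1\}}]$. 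The first series is comparable to $\sum_\ell P(|X_1|>C\ell^\beta)$, finite by the pivot; the second, after interchanging sum and expectation and using that $\sum_{\ell\ge m}\ell^{-\beta}$ is comparable to $m^{1-\beta}$, evaluates to a constant multiple of $\E|X_1|^{1/\beta}$, again finite. Hence $\Sigma<\infty$ a.s.

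The ``only if'' direction is the hard part, because a single large summand $X_\ell$ may be cancelled inside the partial sum $S_{a^kb}$, so the triangle inequality cannot simply be reversed. My plan is to pass to the independent block increments $U_0=S_b$ and $U_j=S_{a^jb}-S_{a^{j-1}b}$ for $j\ge1$. From $|U_{k+1}|\le|S_{a^{k+1}b}|+|S_{a^kb}|$ together with the monotonicity $a^{-(k+1)\beta}\le a^{-k\beta}$, summation in $k$ yields the lower bound $\Sigma\ge\tfrac12\sum_{j\ge1}a^{-j\beta}|U_j|$. It therefore suffices to show $\sum_{j\ge1}a^{-j\beta}|U_j|=\infty$ a.s.\ whenever $\E|X_1|^{1/\beta}=\infty$, and since the $U_j$ are independent, by the second Borel--Cantelli lemma this follows once I prove $\sum_j P(|U_j|>a^{j\beta})=\infty$.

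The core estimate is then a single-big-jump lower bound for the tails of the block sums $U_j$, where $U_j$ is a sum of $m_j$ (comparable to $a^j$) i.i.d.\ copies of $X_1$. Here I would symmetrize, setting $\tilde U_j=U_j-U_j'$ for an independent copy, and apply L\'evy's maximal inequality in the form $P(|\tilde U_j|>x)\ge\tfrac12 P(\max_i|\tilde X_i|>2x)$; since $P(\max_i|\tilde X_i|>2x)=1-(1-P(|\tilde X_1|>2x))^{m_j}$ is bounded below by a constant times $\min(m_j P(|\tilde X_1|>2x),1)$, and a standard desymmetrization bounds $P(|\tilde X_1|>2x)$ below by a constant times $P(|X_1|>Cx)$ for large $x$, I obtain $P(|\tilde U_j|>x)\ge c\,\min(m_j P(|X_1|>Cx),1)$. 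Taking $x=a^{j\beta}$ and summing, the right-hand side diverges by the pivot: either the minimum equals $1$ infinitely often, or the sum is eventually comparable to $\sum_j a^j P(|X_1|>Ca^{j\beta})=\infty$. Thus $\sum_j P(|\tilde U_j|>a^{j\beta})=\infty$, so $\sum_j a^{-j\beta}|\tilde U_j|=\infty$ a.s.; and a convergence-of-symmetrization argument (if $\sum_j a^{-j\beta}|U_j|$ converged a.s., so would its independent copy, and hence the symmetrized series) together with Kolmogorov's $0$--$1$ law upgrades this to $\sum_{j\ge1}a^{-j\beta}|U_j|=\infty$ a.s., whence $\Sigma=\infty$ a.s. The step I expect to be most delicate is exactly this single-big-jump lower bound: because $\beta>1$ forces $1/\beta<1$, in the relevant regime $X_1$ has no mean and very heavy tails, so the only robust way to bound $P(|U_j|>a^{j\beta})$ from below is through the largest summand, which is precisely what symmetrization plus the L\'evy inequality provides.
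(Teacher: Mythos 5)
Your proof is correct, but it takes a genuinely different route from the paper's, and in the necessity direction it proves something formally stronger. The paper reads ``converges absolutely'' in the strong, doubly-indexed sense that $\sum_k a^{-k\beta}\sum_{\ell\le a^kb}|X_\ell|<\infty$ a.s.: this licenses a Fubini-type reordering of the double sum into $\frac{1}{1-a^{-\beta}}\bigl(\sum_{\ell<b}X_\ell+\sum_{\ell\ge b}a^{-\beta\lceil\log_a(\ell/b)\rceil}X_\ell\bigr)$, a weighted series of \emph{independent} terms, after which the necessity half of Kolmogorov's three-series theorem immediately gives $\sum_\ell P\{|X_1|>C\ell^\beta\}<\infty$, i.e.\ $\E[|X_1|^{1/\beta}]<\infty$; for sufficiency the paper dominates the weights by a constant times $\ell^{-\beta}$ and cites a Marcinkiewicz--Zygmund-type result of Chandra and Ghosal for $\sum_\ell\ell^{-\beta}|X_\ell|<\infty$ a.s. Your sufficiency argument is the same domination, made self-contained by running the series criterion for nonnegative independent terms directly (note that for terms in $[0,1]$ the truncated variances are dominated by the truncated means, so your two series indeed suffice). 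Your necessity argument, by contrast, never reorders: writing $S_n=\sum_{\ell\le n}X_\ell$, it uses only the weaker hypothesis $\sum_k a^{-k\beta}|S_{a^kb}|<\infty$ a.s., passes to the independent block increments $U_j=S_{a^jb}-S_{a^{j-1}b}$, and recovers tail divergence via symmetrization, L\'evy's maximal inequality, the second Borel--Cantelli lemma and the zero-one law; each estimate checks out (the bound $\Sigma\ge\tfrac12\sum_j a^{-j\beta}|U_j|$, the inequality $P\{|\tilde U_j|>x\}\ge\tfrac12 P\{\max_i|\tilde X_i|>2x\}$ from L\'evy reflection plus $\max_i|\tilde X_i|\le 2\max_k|\tilde S_k|$, the desymmetrization for $x$ beyond a median of $X_1$, and the dichotomy when summing $\min\bigl(m_jP\{|X_1|>Ca^{j\beta}\},1\bigr)$). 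The trade-off: the paper's proof is a few lines, but its reordering step is only justified when cancellation inside the partial sums $S_{a^kb}$ is excluded by hypothesis, whereas your argument establishes the equivalence under either reading of ``absolute convergence'' --- at the price of considerably heavier machinery, all of it classical.
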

\begin{proof}
Provided that the series converges absolutely almost surely, we may change the order of summation to get
\begin{equation}\label{umord}\begin{split}
\sum_{k=0}^{\infty} a^{-k \beta} \sum_{\ell= 1}^{a^kb} X_\ell & = \sum_{\ell=1}^{\infty} \Bigg( \sum_{k= \max\left\{\left\lceil \frac{\log(\ell/b)}{\log a}\right\rceil, 0 \right\}}^\infty a^{-k \beta}\Bigg)  X_\ell\\
&=\frac{1}{1-a^{-\beta}} \left( \sum_{\ell=1}^{b-1} X_\ell + \sum_{\ell=b}^{\infty}  a^{- \beta\left\lceil \frac{\log(\ell/b)}{\log a}\right\rceil}  X_\ell\right).
\end{split}\end{equation}
The latter series has independent summands and hence by Kolmogorov's three-series theorem we get for any $d>0$
\begin{align*}
\sum_{\ell=b}^{\infty} P \left\{ | X_1 |^{1/{\beta}} >  d^{1/{\beta}} \tfrac{a}{b}\, \ell\right\} & =\sum_{\ell=b}^{\infty}  P \left\{ | X_1 |^{1/\beta} >  d^{1/{\beta}}   a^{ \frac{\log(\ell/b)}{\log a} +1 } \right\} \\
& \leq\sum_{\ell=b}^{\infty} P \left\{ \left|a^{-\beta \left \lceil \frac{\log(\ell/b)}{\log a}\right \rceil } X_\ell \right| > d \right \}<\infty.
\end{align*}
Choosing $d=(\tfrac{b}{a})^\beta$ this shows that $\E \big[  |X_1|^{1/{\beta}} \big] < \infty$. \\ 
  Conversely, if $\E \big[  |X_1|^{1/{\beta}} \big] $ exists then 
  $\sum_{\ell=b}^{\infty} \ell^{-\beta} |X_\ell|$ converges almost surely, cf.\ Remark 3 in \cite{Chandra}. Thus we get 
   \begin{equation*}
  \sum_{\ell=b}^{\infty}a^{- \beta  \left\lceil \frac{\log(\ell/b)}{\log a } \right\rceil}  |X_\ell| \leq
   \sum_{\ell=b}^{\infty} \ell^{-\beta} |X_\ell|<\infty\quad\text{ almost surely}
     \end{equation*}
 and the assertion follows by \eqref{umord}.
\end{proof}
\begin{lemma}\label{incr}
Let $(L(t))_{t\in\R}$ be a two-sided L\'evy process and let $(Y_t)_{t\in\R} := (L(\frac{e^{\delta t}-1}{e^{\delta} - 1}))_{t \in \mathbb R}$ be the time-changed L\'evy process.
Then for fixed $T\in\R$ the increment process $(K_{t;T})_{t\in\R}:=(Y_{t+T}-Y_T)_{t\in\R}$ fulfills \eqref{LeK}, i.e.\ in terms of the L\'evy exponent we have
\begin{equation*}
  \Psi_{t_1,\ldots, t_k;T}^K(\theta_1,\ldots,\theta_k) = e^{\delta T}
  \Psi_{t_1,\ldots, t_k}^Y(\theta_1,\ldots,\theta_k)
\end{equation*}
for all $k\in\nat$ and $t_1,\ldots,t_k,\theta_1,\ldots,\theta_k\in\R$.
\end{lemma}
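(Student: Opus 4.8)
The plan is to exploit the explicit time-change structure $Y_t = L(\varphi(t))$ with $\varphi(t) := (e^{\delta t}-1)/(e^{\delta}-1)$, replacing the dilative-stability scaling \eqref{DSdefi} that was used in the proof of Lemma \ref{incre} by the defining properties of the two-sided L\'evy process. The whole computation rests on one algebraic identity, namely
\begin{equation*}
\varphi(t+T) - \varphi(T) = \frac{e^{\delta(t+T)} - e^{\delta T}}{e^{\delta} - 1} = e^{\delta T}\,\varphi(t),
\end{equation*}
which encodes the dilative scaling on the level of the time change.

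First I would record the two structural facts about the two-sided L\'evy process $(L(s))_{s\in\R}$ that do the work. Writing $\psi := \Psi^L_1$ for the L\'evy exponent of $L(1)$, the first is the linear scaling of the finite-dimensional L\'evy exponents under a dilation of time: for every $c>0$ and all $s_1,\ldots,s_k,\theta_1,\ldots,\theta_k\in\R$,
\begin{equation*}
\Psi^{(L(cs_1),\ldots,L(cs_k))}(\theta_1,\ldots,\theta_k) = c\,\Psi^{(L(s_1),\ldots,L(s_k))}(\theta_1,\ldots,\theta_k).
\end{equation*}
For $k=1$ this is immediate from $\Psi^L_s(\theta) = s\,\psi(\theta)$ when $s\geq0$ and $\Psi^L_s(\theta) = (-s)\,\psi(-\theta)$ when $s<0$, the latter coming from $L(s)\eqd -L^{(2)}(-s)$; the general $k$ follows by decomposing each $L(s_j)$ into increments over the intervals cut out by the sorted points $\{0,s_1,\ldots,s_k\}$, whose lengths all scale by $c$. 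The second fact is the stationarity of the increments: since $(L(s))_{s\in\R}$ has stationary and independent increments, for fixed $T$ the shifted vector $\big(L(\varphi(T)+u_j) - L(\varphi(T))\big)_j$ has the same joint law as $(L(u_j))_j$.

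With these in hand the proof is short. Using $K_{t_j;T} = Y_{t_j+T} - Y_T = L(\varphi(t_j+T)) - L(\varphi(T))$ and setting $u_j := \varphi(t_j+T) - \varphi(T) = e^{\delta T}\varphi(t_j)$, stationarity of the increments gives
\begin{equation*}
(K_{t_1;T},\ldots,K_{t_k;T}) \eqd \big(L(e^{\delta T}\varphi(t_1)),\ldots,L(e^{\delta T}\varphi(t_k))\big),
\end{equation*}
and then the scaling relation with $c = e^{\delta T}$ together with $Y_{t_j} = L(\varphi(t_j))$ yields
\begin{equation*}
\Psi^K_{t_1,\ldots,t_k;T}(\theta_1,\ldots,\theta_k) = e^{\delta T}\,\Psi^{(L(\varphi(t_1)),\ldots,L(\varphi(t_k)))}(\theta_1,\ldots,\theta_k) = e^{\delta T}\,\Psi^Y_{t_1,\ldots,t_k}(\theta_1,\ldots,\theta_k),
\end{equation*}
which is precisely \eqref{LeK}.

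I expect the main obstacle to be the careful bookkeeping of the two-sided structure when the time arguments $\varphi(t_j)$ are negative, i.e.\ establishing the single-time scaling $\Psi^L_s(\theta) = (-s)\,\psi(-\theta)$ for $s<0$ and then propagating both the scaling relation and the stationarity statement uniformly in the sign of the $s_j$. This is handled exactly as the inductive $k=1\Rightarrow k=2\Rightarrow$ general argument in the proof of Lemma \ref{incre}: once the single-time identities are in place, independence of the increments of $(L(s))_{s\in\R}$ carries them to arbitrary $k$ without further difficulty.
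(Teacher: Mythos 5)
Your proposal is correct, and for $k=1$ it is essentially the paper's argument: writing $\varphi(t):=(e^{\delta t}-1)/(e^{\delta}-1)$, both proofs rest on the identity $\varphi(t+T)-\varphi(T)=e^{\delta T}\varphi(t)$, stationarity of the increments of $L$, and linearity of the L\'evy exponent in the time variable. Where you diverge is the multivariate step. The paper proves \eqref{LeK} only for $k=1$ and then lifts it to $k\geq2$ by the independence-of-increments induction already carried out in Lemma \ref{incre} (decomposing $K_{t_2;T}=K_{t_1;T}+K_{t_2-t_1;t_1+T}$ and invoking the univariate identity at shifted values of $T$), i.e.\ it works at the level of the increment process $K$. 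You instead prove joint versions of the two L\'evy-process facts --- joint stationarity of increments, and the finite-dimensional scaling $\Psi^{(L(cs_1),\ldots,L(cs_k))}=c\,\Psi^{(L(s_1),\ldots,L(s_k))}$ for $c>0$, obtained by decomposing over the intervals cut out by the sorted points $\{0,s_1,\ldots,s_k\}$ --- and then conclude for all $k$ in a single step, working entirely at the level of $L$. Both routes ultimately use the same independence structure, so the difference is organizational rather than substantive, but your version buys one genuine improvement in rigor: since $\varphi(t_j)<0$ exactly when $t_j<0$, negative time arguments really do occur, and for $s<0$ one has $\Psi^L_s(\theta)=(-s)\Psi^L_1(-\theta)$ rather than $s\,\Psi^L_1(\theta)$; consequently the paper's intermediate equality $\Psi^L_{e^{\delta T}\varphi(t)}(\theta)=e^{\delta T}\varphi(t)\,\Psi^L_1(\theta)$ is literally valid only when $\varphi(t)\geq0$, whereas the sign-independent scaling $\Psi^L_{cs}(\theta)=c\,\Psi^L_s(\theta)$, $c>0$, which you isolate explicitly, is exactly what makes the chain $\Psi^K_{t;T}(\theta)=e^{\delta T}\Psi^L_{\varphi(t)}(\theta)=e^{\delta T}\Psi^Y_t(\theta)$ go through in both sign cases.
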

\begin{proof}
Since $(L(t))_{t\in \mathbb R}$ is a L\'evy process, for $t,T\in\R$ we have
$$K_{t;T}=  L\Big(\frac{e^{\delta( t +  T)}-1}{e^{
 \delta} - 1}\Big) -  L\Big(\frac{e^{\delta T}-1}{e^{ \delta} - 1}\Big) \stackrel{\rm d}{=} L\Big(e^{\delta T}\, \frac{e^{\delta t}-1}{e^{ \delta} - 1}\Big)$$
which in terms of the L\'evy exponents gives
$$\Psi_{t;T}^{K}(\theta) = \Psi_{ e^{\delta T}\frac{e^{\delta t}-1}{e^{ \delta} - 1} }^{L}(\theta)  =  e^{\delta T} \, \frac{e^{\delta t}-1}{e^{ \delta} - 1}  \Psi_{1}^{L}(\theta)=e^{\delta T}\Psi_{t }^{Y}(\theta)$$
for all $\theta\in\R$ which shows that \eqref{LeK} holds for $k=1$. Now the remaining case $k\geq2$ follows as in the proof of Lemma \ref{incre}.
\end{proof}
\begin{lemma}\label{intconv}
Let $(L(t))_{t\in\R}$ be a two-sided L\'evy process and let $(Y_t)_{t\in\R} := (L(\frac{e^{\delta t}-1}{e^{\delta} - 1}))_{t \in \mathbb R}$ be the time-changed L\'evy process. Then each of the following conditions is sufficient for the almost sure convergence of
$$\int_a^b e^{t(\alpha-\delta/2)}\,dY_t\quad\text{ as $a\downarrow -\infty$ for any }b\in\R.$$
\begin{itemize}
\item[\it (a)] $\delta>0$ and $\alpha>\delta/2$.
\item[\it (b)] $\delta<0$, $\alpha>-\delta/2$ and
\begin{equation}\label{momentcond}
\sup_{\frac{1}{\delta} \log 2\leq s\leq 0} \E \left[\Big|\int_{s}^0 e^{t(\alpha -
\delta /2)}\,dY_t\Big|^{\gamma}\right] < \infty\quad\text{ for some }\gamma>\frac{-\delta}{\alpha+\delta/2}.
\end{equation}
\end{itemize}
\end{lemma}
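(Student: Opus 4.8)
The plan is to handle the two cases separately. Throughout set $\kappa:=\alpha-\delta/2$, which is strictly positive in both cases, and note that $\int_a^b=\int_a^0+\int_0^b$ with a fixed random integral $\int_0^b e^{\kappa t}\,dY_t$, so it suffices to prove almost sure convergence of $\int_a^0 e^{\kappa t}\,dY_t$ as $a\downarrow-\infty$. Case~(a) is elementary: for $\delta>0$ the function $t\mapsto(e^{\delta t}-1)/(e^\delta-1)$ decreases to the finite value $-1/(e^\delta-1)$ from above as $t\to-\infty$, so by right-continuity of $L$ the path $Y_t=L((e^{\delta t}-1)/(e^\delta-1))$ converges almost surely and is bounded on $(-\infty,0]$. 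Integration by parts \eqref{ibp} gives $\int_a^0 e^{\kappa t}\,dY_t=Y_0-e^{\kappa a}Y_a-\int_a^0\kappa e^{\kappa t}Y_t\,dt$; since $\kappa>0$ the boundary term $e^{\kappa a}Y_a$ tends to $0$, and the remaining integrand is dominated by $\kappa e^{\kappa t}\sup_{(-\infty,0]}|Y|$, which is integrable, so the integral converges absolutely.

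For case~(b) I would exploit the self-similar block structure. Put $s_k:=(k/\delta)\log 2$, so that $s_0=0$ and $s_k\downarrow-\infty$, and set $V_k:=\int_{s_{k+1}}^{s_k}e^{\kappa t}\,dY_t$. Each $V_k$ is a functional of the increments of $Y$ over the disjoint interval $[s_{k+1},s_k]$, hence the $V_k$ are independent and $\sum_{k=0}^{K-1}V_k=\int_{s_K}^0 e^{\kappa t}\,dY_t$. Writing $\beta:=-\kappa/\delta$ and substituting $t=r+s_k$ we have $V_k=e^{\kappa s_k}\int_{s_1}^0 e^{\kappa r}\,dK_{r;s_k}$ with $K_{r;s_k}=Y_{r+s_k}-Y_{s_k}$; since $e^{\delta s_k}=2^k$, Lemma~\ref{incr} shows that the L\'evy exponent of the driving increment process is $2^k$ times that of $Y$, whence, using $e^{\kappa s_k}=2^{-k\beta}$, the random integral satisfies $V_k\eqd 2^{-k\beta}\sum_{\ell=1}^{2^k}W_\ell$ for i.i.d.\ copies $W_\ell$ of the infinitely divisible variable $W:=\int_{s_1}^0 e^{\kappa r}\,dY_r$. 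The assumption $\alpha>-\delta/2$ is precisely $\beta>1$, and because $\gamma>(-\delta)/(\alpha+\delta/2)>(-\delta)/(\alpha-\delta/2)=1/\beta$, the moment condition \eqref{momentcond} yields $\E[|W|^{1/\beta}]<\infty$ by monotonicity of moments.

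It remains to derive almost sure convergence of $\sum_{k\ge0}V_k$ from $\E[|W|^{1/\beta}]<\infty$. Since almost sure convergence of a series of independent summands depends only on the individual marginal laws, I may realize the blocks through a single i.i.d.\ sequence by replacing $V_k$ with the independent variables $2^{-k\beta}\sum_{\ell=2^k}^{2^{k+1}-1}W_\ell$. As the weight $2^{-k\beta}$ is comparable to $\ell^{-\beta}$ for $\ell$ in the $k$-th block, this series converges absolutely if and only if $\sum_{\ell}\ell^{-\beta}|W_\ell|<\infty$ almost surely, which in turn is equivalent to $\E[|W|^{1/\beta}]<\infty$ by the reordering argument underlying Lemma~\ref{sconv} (cf.\ Remark~3 in \cite{Chandra}). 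This proves convergence along the grid $a=s_K$. To reach all $a\downarrow-\infty$ I would bound the partial blocks $M_K:=\sup_{s_{K+1}\le a\le s_K}|\int_a^{s_K}e^{\kappa t}\,dY_t|$: rescaling as above and applying a maximal inequality for processes with independent increments turns the endpoint bounds provided by \eqref{momentcond} into an estimate for $\E[M_K^\gamma]$ which, after the factor $2^{-K\beta\gamma}$, is summable; hence $M_K\to0$ almost surely by Borel--Cantelli. This is exactly where the supremum over $s$ in \eqref{momentcond} enters.

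The crux lies in case~(b): passing from the distributional self-similarity $V_k\eqd 2^{-k\beta}\sum_{\ell=1}^{2^k}W_\ell$ to a genuine almost sure statement when $W$ may have neither finite mean nor finite variance. This is precisely the difficulty that Lemma~\ref{sconv}, equivalently the criterion $\sum_\ell\ell^{-\beta}|W_\ell|<\infty\Leftrightarrow\E[|W|^{1/\beta}]<\infty$, is built to resolve, and the secondary obstacle is the maximal estimate needed to upgrade convergence along the grid to the full limit, for which the supremum form of \eqref{momentcond} is indispensable.
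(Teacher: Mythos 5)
Your proof is sound in substance, and it takes a genuinely different route from the paper's at both decisive points. In case (a) your argument is purely pathwise: for $\delta>0$ the time change $(e^{\delta t}-1)/(e^{\delta}-1)$ decreases to the finite limit $-1/(e^{\delta}-1)$ as $t\to-\infty$, so by the c\`adl\`ag property of $L$ the path of $Y$ is almost surely bounded on $(-\infty,0]$, and integration by parts \eqref{ibp} makes the boundary term vanish (this is where $\alpha-\delta/2>0$ enters) while the compensating Riemann integral is dominated by an integrable function. The paper instead computes L\'evy exponents over dyadic blocks, checks continuity at zero of the limiting exponent, and upgrades convergence in distribution to almost sure convergence through Corollary A2.3 of \cite{JurVer}; your argument is more elementary and yields convergence along the full continuum $a\downarrow-\infty$ at once. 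In case (b) your ingredients --- the block identity $V_k\eqd 2^{-k\beta}\sum_{\ell=1}^{2^k}W_\ell$ obtained from Lemma \ref{incr}, and the reduction to $\E[|W|^{1/\beta}]<\infty$ via the Marcinkiewicz--Zygmund result of \cite{Chandra} --- are exactly what the paper's Lemma \ref{sconv} packages, and your appeal to the L\'evy/It\^o--Nisio equivalence to replace the blocks by distributional copies built from one i.i.d.\ sequence is legitimate. The difference is again the final upgrade: the paper takes an arbitrary sequence $a_n\downarrow-\infty$, shows the leftover partial block tends to $0$ in probability by the very Markov estimate you quote, and once more invokes Corollary A2.3 of \cite{JurVer}, whereas you prove convergence along the geometric grid and control the partial blocks uniformly by a maximal inequality plus Borel--Cantelli.

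That last step is the one place where your sketch does not work as literally stated. The hypothesis \eqref{momentcond} bounds the $\gamma$-th moments at the endpoints $s$, not the $\gamma$-th moment of the supremum, and the passage from a supremum of moments to a moment of the supremum via an Ottaviani/Etemadi inequality loses a logarithm: for $N:=\sup_{\frac1\delta\log2\le s\le0}\big|\int_s^0 e^{t(\alpha-\delta/2)}\,dY_t\big|$ one only gets the tail bound $P\{N>3u\}\le 3\sup_{s} P\{|\int_s^0 e^{t(\alpha-\delta/2)}\,dY_t|>u\}\le C u^{-\gamma}$, and integrating this tail at exponent $\gamma$ diverges, so $\E[M_K^{\gamma}]$ cannot be bounded this way. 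The repair is easy and stays within your strategy: fix any $\gamma'$ with $1/\beta<\gamma'<\min(\gamma,1)$, which is possible since $1/\beta=-\delta/(\alpha-\delta/2)<1$ and $1/\beta<\gamma$; then $\E[N^{\gamma'}]<\infty$ follows from the tail bound because $\gamma'<\gamma$, and after rescaling $M_K$ as you do, subadditivity of $x\mapsto x^{\gamma'}$ for $\gamma'\le1$ gives $P\{M_K>\varepsilon\}\le\varepsilon^{-\gamma'}2^{K(1-\beta\gamma')}\E[N^{\gamma'}]$, which is summable in $K$ because $\beta\gamma'>1$; Borel--Cantelli then finishes as you intended. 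Alternatively, you could drop the maximal inequality altogether: your Markov estimate already shows the partial blocks tend to $0$ in probability, and by Corollary A2.3 in \cite{JurVer} convergence in probability of such integrals is equivalent to their almost sure convergence --- which is precisely how the paper closes the argument.
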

\begin{proof}
Let $b\geq a_n\downarrow -\infty$ be an arbitrary sequence and $\delta\not=0$. Choose $k_0 \in \mathbb N$ such that $-\frac{1}{|\delta|} \log(k_0) \le b $ and $k(n) \in \mathbb N_0$ such that 
$$-\frac{1}{|\delta|} \log\big(2^{k(n)+1}k_0\big) < a_n \leq -\frac{1}{|\delta|} \log\big(2^{k(n)}k_0\big).$$
Setting $\gamma_n = - \frac{1}{|\delta|} \log (2^nk_0)$ we decompose 
\begin{align*} 
\int_{a_n}^b e^{t(\alpha - \delta /2)} dY_t
&= \int_{\gamma_0}^{b}
e^{t(\alpha - \delta /2)} dY_t  + \int_{\gamma_{k(n)}}^{\gamma_0} e^{t(\alpha - \delta /2)} dY_t  + \int_{a_n}^{\gamma_{k(n)}} e^{t(\alpha - \delta /2)} dY_t \\ 
&=: A +B_n +C_n ,
\end{align*}
where $A,B_n,C_n$ are independent and $A$ is a fixed random variable. Now observe that 
\begin{align*}
 B_n= \sum_{k=0}^{k(n)-1} \int_{\gamma_{k+1}}^{\gamma_k} e^{t(\alpha - \delta/2)} dY_t =: \sum_{k=0}^{k(n)-1} Z_k
\end{align*}
is a sum of independent random variables $(Z_k)_{k\in\nat}$. Note that $(Z_k)_{k\in\nat}$ is a sequence of infinitely divisible random variables by Theorem 9.1 in \cite{Sat1}, since for any $T\in\R$ the process $(\int_T^{T+s}e^{t(\alpha-\delta/2)} dY_t)_{s\geq0}$ is additive as in Lemma \ref{driving}. We will now distinguish between the two cases $\delta> 0$ and $\delta<0 $.

(i) In case $\delta > 0$ for any $k \in \mathbb N_0$ we get by a change of variables
$$Z_k=\int_{\gamma_{k+1}}^{\gamma_k} e^{t (\alpha - \delta /2 )} dY_t\stackrel{d}{=} \int_{\gamma_{k+2}}^{\gamma_{k+1}} e^{(t+ \frac{1}{\delta} \log 2)  (\alpha - \delta /2 )} dK_{t;\frac{1}{\delta} \log 2} $$
and by Lemma \ref{incr} we obtain that 
\begin{align*}
 \big(K_{t;\frac{1}{\delta} \log 2}\big)_{t \in \mathbb R} \stackrel{d}{=} \big(Y_{t}^{(1)} + Y_{t}^{(2)}\big)_{t \in\R}, 
\end{align*}
where $(Y_{t}^{(1)})_{t\in \mathbb R} \stackrel{d}{=}(Y_{t}^{(2)})_{t\in \mathbb R} $ are independent copies of $(Y_{t})_{t\in \mathbb R}$. It follows that
$$Z_k\stackrel{d}{=} 2^{(\alpha - \delta/2) / \delta} \big( Z_{k+1}^{(1)} + Z_{k+1}^{(2)} \big),$$
where $Z_{k+1}^{(1)}$, $Z_{k+1}^{(2)}$ are i.i.d.\ and distributed as $Z_k$.
Let $\Psi_{k}$ be the L\'evy exponent of $Z_k$ then we obtain $ \Psi_{k}(\theta) = 2  \, \Psi_{k+1}(2^{(\alpha - \delta/2) / \delta} \theta)$ for any $\theta \in \mathbb R$ and $k\in\nat_0$.
Inductively, for the L\'evy exponent $\Psi_{B_n}$ of $B_n$ we get  
\begin{align*}
 \Psi_{B_n}(\theta) &= \sum_{k=0}^{k(n)-1} \Psi_k (\theta) = \sum_{k=0}^{k(n)-1} 2^{-k} \Psi_0(2^{-k (\alpha - \delta /2)/\delta} \theta).
\end{align*}
Since in (a) we assume $\alpha > \delta/2$, we get $\Psi_0(2^{-k (\alpha - \delta /2)/\delta} \theta)\to\Psi_0(0)=0$ as $k\to\infty$ and hence
$\Psi_{B_n}(\theta)\to g(\theta)$ for some function $g:\mathbb R \to \mathbb C$ with $g(0)=0$. By continuity of 
$\Psi_0$ at $0$, for any $\varepsilon > 0$ we can choose $\eta > 0$ such that 
\begin{align*}
 |\Psi_{0}\big( 2^{-k (\alpha - \delta /2)/\delta} \theta \big) | \leq \frac\varepsilon4 \quad\text{ for all } k \in \mathbb N_0 \text{ and }|\theta|< \eta.
\end{align*}
For any $\theta \in \mathbb R$ with $|\theta| < \eta$ we can further choose $n \in \mathbb N$ such that $|g(\theta) - \Psi_{B_n}(\theta)|\le \frac\varepsilon2$ and hence 
\begin{align*}
 |g(\theta)| &\le |g(\theta) - \Psi_{B_n}(\theta)| + | \Psi_{B_n}(\theta)| \\ 
 &\le \frac\varepsilon2 + \sum_{k=0}^{k(n)-1} 2^{-k} |\Psi_0(2^{-k (\alpha - \delta /2)/\delta} \theta)|\leq \frac\varepsilon2 + \frac\varepsilon4 \cdot \sum_{k=0}^{\infty} 2^{-k} = \varepsilon
\end{align*}
for all $|\theta|< \eta$ which shows that $g$ is continuous at $0$. By L\'evy's continuity theorem it follows that $(B_n)_{n\in \mathbb N}$ converges in distribution. We now turn to
\begin{align*}
 C_n &= \int_{a_n}^{\gamma_{k(n)}} e^{t(\alpha - \delta/2)} dY_t = \int_{a_n - \gamma_{k(n)}}^{0} e^{(t+\gamma_{k(n)})(\alpha-\delta/2)}dK_{t;\gamma_{k(n)}} \\ 
 &= \big( 2^{k(n)}k_0 \big)^{- \frac{\alpha - \delta/2}{\delta}} \cdot \int_{a_n - \gamma_{k(n)}}^0 e^{t(\alpha- \delta/2)} dK_{t;\gamma_{k(n)}}=: b_n\cdot W_n. 
\end{align*}
Then $b_n \to 0$ and the following calculation shows that $W_n \to0$ in probability. For fixed $n\in \mathbb N$ let 
$a_n - \gamma_{k(n)} = s_{0}^{(m)} = t_{0}^{(m)} \le s_{1}^{(m)}< t_{1}^{(m)} \le \ldots \le s_{m}^{(m)} < t_{m}^{(m)} = s_{m+1}^{(m)}=0$ be a partition with $\max_{k=1,\ldots,m} \big( t_{k}^{(m)} - t_{k-1}^{(m)} \big) \to 0$ as 
$m \to \infty$. Then we get almost surely
\begin{align*}
 W_n & = \lim_{m \to \infty} \sum_{j=0}^{m}e^{t_j^{(m)} (\alpha-\delta/2)} \big( K_{s_{j+1}^{(m)};\gamma_{k(n)}}- K_{s_{j}^{(m)};\gamma_{k(n)}} \big)\\
 & = \lim_{m \to \infty} \sum_{j=0}^{m}e^{t_j^{(m)} (\alpha-\delta/2)} K_{s_{j+1}^{(m)}-s_{j}^{(m)} ;s_{j}^{(m)}+\gamma_{k(n)}}.
\end{align*}
Hence for the L\'evy exponent $\Psi_{W_n}$ of $W_n$ we obtain by Lemma \ref{incr} for any $\theta \in \mathbb R$ 
\begin{align*}
\Psi_{W_n}(\theta) &= \lim_{m \to \infty} \sum_{j=0}^{m} \Psi_{s_{j+1}^{(m)}-s_{j}^{(m)};s_{j}^{(m)}+ \gamma_{k(n)}}^{K}\big( e^{t_j^{(m)} (\alpha - \delta/2) }\theta \big) \\ 
&= \lim_{m \to \infty} \sum_{j=0}^{m} e^{\delta (s_{j}^{(m)}+ \gamma_{k(n)})}  \Psi_{s_{j+1}^{(m)}-s_{j}^{(m)}}^{Y}\big( e^{t_j^{(m)} (\alpha - \delta/2) }\theta \big) \\ 
& = \big( 2^{k(n)} k_0\big)^{-1} \lim_{m \to \infty} \sum_{j=0}^{m} \Psi_{s_{j+1}^{(m)}-s_{j}^{(m)};s_{j}^{(m)}}^{K}\big( e^{t_j^{(m)} (\alpha - \delta/2) }\theta \big) \\ 
&=\big( 2^{k(n)} k_0\big)^{-1} \Psi_{V_n}(\theta),
\end{align*}
where $V_n= \int_{a_n-\gamma_{k(n)}}^{0}e^{t(\alpha-\delta/2)}dY_t$. Now for every subsequence $n' \to \infty$ there exists a further subsequence $n'' \to \infty$ with $a_{n''}- \gamma_{k(n'')} \to a \in [-\frac1\delta \log 2, \, 0 \, ]$ and hence $V_{n''} \to \int_{a}^{0} e^{t(\alpha -\delta/2)} dY_t$ in probability by stochastic continuity. Hence $\Psi_{W_{n''}}(\theta) \to 0$ for all $\theta \in \mathbb R^d$. This shows $W_n \to 0$ in probability and hence $C_n \to 0$ in probability. 

(ii) In case $\delta < 0$ for any $k \in \mathbb N_0$ we get by a change of variables
\begin{align*}
 Z_k = \int_{\gamma_{k+1}}^{\gamma_k} e^{t (\alpha - \delta/2)}dY_t  
 & =  \int_{\frac1{\delta}\log 2}^{0} e^{(t + \gamma_k) (\alpha - \delta/2)}dK_{t;\gamma_k}\\
 & =(2^kk_0)^{(\alpha-\delta/2)/\delta} \int_{\frac1{\delta}\log 2}^{0} e^{t (\alpha - \delta/2)}dK_{t;\gamma_k}
\end{align*}
and by Lemma \ref{incr} we obtain 
\begin{equation}\label{Kdec}
 \big(K_{t;\gamma_k}\big)_{t \in \mathbb R} \stackrel{d}{=} \Big(\sum_{\ell=1}^{2^kk_0}Y_{t}^{(\ell)}\Big)_{t \in\R}, 
\end{equation}
where $(Y_{t}^{(\ell)})_{t\in \mathbb R}$, $\ell\in\nat$,  are independent copies of $(Y_{t})_{t\in \mathbb R}$. It follows that
\begin{align*}
 B_n \stackrel{d}{=} k_0^{(\alpha-\delta/2)/\delta}\sum_{k=0}^{k(n)-1} 2^{k(\alpha -\delta/2)/ \delta} \sum_{\ell=1}^{2^{k}k_0} \int_{\frac1{\delta}\log 2}^{0} e^{t (\alpha - \delta/2)}dY_{t}^{(\ell)}.
\end{align*}
By our assumptions in (b), Lemma \ref{sconv} applied to $a=2$, $\beta=-(\alpha-\delta/2)/\delta>1$ and $b=k_0$ shows that $B_n$ converges in distribution as $ n\to \infty$, since the i.i.d.\ integrals have finite absolute moment of order $1/\beta=-\delta/(\alpha-\delta/2)<-\delta/(\alpha+\delta/2)$ by \eqref{momentcond}.
We will now show that $C_n\to0$ in probability. For any $\varepsilon > 0$ and $\gamma>0$ we get using \eqref{Kdec} and Markov's inequality
\begin{align*}
P\big\{ |C_n| > \varepsilon \big\}
 &= P \bigg\{ \bigg|\int_{a_n - \gamma_{k(n)}}^{0} e^{(t+ \frac{1}{\delta}\log(2^{k(n)}k_0))(\alpha - \delta/2)} dK_{t;\gamma_{k(n)}} \bigg| > \varepsilon \bigg\} \\
 & \le P \bigg\{ \sum_{\ell=1}^{2^{k(n)}k_0} \bigg|\int_{a_n - \gamma_{k(n)}}^{0} e^{t(\alpha - \delta/2)} dY_{t}^{(\ell)} \bigg| > \varepsilon ( 2^{k(n)}k_0 )^{- (\alpha - \delta/2)/\delta}\bigg\} \\
 & \le 2^{k(n)}k_0\, P \bigg\{ \bigg|\int_{a_n - \gamma_{k(n)}}^{0} e^{t(\alpha - \delta/2)} dY_{t} \bigg| > \varepsilon ( 2^{k(n)}k_0 )^{-1-(\alpha - \delta/2)/\delta}\bigg\} \\
 & \le \varepsilon^{-\gamma} (2^{k(n)}k_0)^{1+\gamma(1+(\alpha-\delta/2)/\delta)}\, \mathbb E \bigg[ \bigg|\int_{a_n - \gamma_{k(n)}}^{0} e^{t(\alpha - \delta/2)} dY_{t} \bigg|^{\gamma}\, \bigg] \\
 & \le \varepsilon^{-\gamma} (2^{k(n)}k_0)^{1+\gamma(\alpha+\delta/2)/\delta}\, \sup_{\frac1{\delta}\log2\leq s\leq0}\mathbb E \bigg[ \bigg|\int_{s}^{0} e^{t(\alpha - \delta/2)} dY_{t} \bigg|^{\gamma}\, \bigg].
  \end{align*}
The first term on the right-hand side converges to zero if $1+\gamma\frac{\alpha+\delta/2}{\delta}< 0$, or equivalently if $\gamma > \frac{-\delta}{\alpha+\delta/2}$ in which case the second term is bounded by \eqref{momentcond}.

Alltogether, in both cases (i) and (ii) we have shown that if either condition (a) or (b) is fulfilled then $(A+B_n +C_n)_{n \in \mathbb N}$ converges in distribution, which in our situation by Corollary A2.3 in \cite{JurVer} is equivalent to the asserted almost sure convergence.
\end{proof}
\begin{theorem} \label{rirep2} 
Let $( L(t))_{t \in \mathbb R}$ be a two-sided L\'evy process and $(Y_t)_{t \in \mathbb R} = ( L( \frac{e^{\delta t}-1}{e^{ \delta} - 1} ))_{t \in \mathbb R}$ be the time-changed L\'evy process such that
 one of the conditions (a) or (b) in Lemma \ref{intconv} is fullfilled. Then the process $(X_t)_{t \ge 0}$ given by
\begin{align} \label{intrep} 
 X_t:= \int_{-\infty}^{\log t} e^{u (\alpha -\delta / 2)} dY_u 
\end{align}
is well-defined and an additive $(\alpha,\delta)$-dilatively stable process.
\end{theorem}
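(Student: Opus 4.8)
The plan is to treat well-definedness as already settled by Lemma \ref{intconv} --- conditions (a)/(b) guarantee that the improper integral in \eqref{intrep} converges almost surely for every $t>0$, so each $X_t$ is a genuine random variable --- and then to verify the four defining properties of an additive process together with the scaling relation \eqref{DSdefi}. I set $X_0:=0$, which is consistent since $\log t\to-\infty$ as $t\downarrow0$.

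First I would establish additivity. For $0<s<t$ the integration-by-parts definition \eqref{ibp} gives $X_t-X_s=\int_{\log s}^{\log t}e^{u(\alpha-\delta/2)}\,dY_u$, a random integral over a finite interval; since $Y$ inherits independent increments from $L$, integrals over disjoint intervals are independent, which yields property (i), while (iv) holds by definition. For stochastic continuity at interior points I would note that as $s\to t>0$ the interval $[\log s,\log t]$ shrinks and the corresponding random integral tends to $0$ in probability, exactly as in the proof of Lemma \ref{driving}; at $t=0$ I would use that for a fixed reference $b_0$ the integral $\int_{\log t}^{b_0}e^{u(\alpha-\delta/2)}\,dY_u$ converges to $\int_{-\infty}^{b_0}e^{u(\alpha-\delta/2)}\,dY_u$ as $\log t\to-\infty$, whence $X_t\to0$. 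Finally the càdlàg property follows because, viewing the integral as a function of its upper limit $b=\log t$ via \eqref{ibp}, it equals $e^{b(\alpha-\delta/2)}Y_b$ minus a term continuous in $b$; as $Y$ is càdlàg (a continuous increasing time change of the càdlàg process $L$) and $t\mapsto\log t$ is a continuous increasing bijection of $(0,\infty)$ onto $\R$, the path $t\mapsto X_t$ is càdlàg.

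The core of the argument is dilative stability. Fixing $T>0$ and applying the change-of-variables formula with the shift $\gamma(u)=u+\log T$, so that $Y_{\gamma(u)}-Y_{\gamma(0)}=K_{u;\log T}$ and $e^{\gamma(u)(\alpha-\delta/2)}=T^{\alpha-\delta/2}e^{u(\alpha-\delta/2)}$, I would obtain
$$X_{Tt}=\int_{-\infty}^{\log(Tt)}e^{u(\alpha-\delta/2)}\,dY_u=T^{\alpha-\delta/2}\int_{-\infty}^{\log t}e^{u(\alpha-\delta/2)}\,dK_{u;\log T}.$$
Consequently the joint log-characteristic function of $(X_{Tt_1},\ldots,X_{Tt_k})$ at $(\theta_1,\ldots,\theta_k)$ equals that of the $K_{\cdot;\log T}$-integrals at $(T^{\alpha-\delta/2}\theta_1,\ldots,T^{\alpha-\delta/2}\theta_k)$. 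I would then invoke Lemma \ref{incr}: since the finite-dimensional Lévy exponents of $K_{\cdot;\log T}$ equal $e^{\delta\log T}=T^{\delta}$ times those of $Y$, this factor survives through the Riemann--Stieltjes sums approximating the random integrals, because those sums are linear combinations of increments and their joint log-characteristic functions therefore scale by $T^{\delta}$. In the limit one gets that the joint Lévy exponent of $\big(\int_{-\infty}^{\log t_j}e^{u(\alpha-\delta/2)}\,dK_{u;\log T}\big)_{j}$ is $T^{\delta}$ times that of $(X_{t_1},\ldots,X_{t_k})$, and combining this with the display above yields precisely \eqref{DSdefi}.

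The main obstacle I anticipate is this last transfer of the $T^{\delta}$-scaling from the increments of $K_{\cdot;\log T}$ to random integrals with an infinite lower limit: one must interchange the scaling with both the Riemann--Stieltjes limit and the limit $a\downarrow-\infty$, and it is the truncation at $-\infty$ --- rather than the finite-interval algebra, which is immediate from \eqref{LeK} --- that requires care. I would handle it by first proving the identity for integrals over finite intervals $[a,\log t]$, where everything reduces to the increment relation \eqref{LeK}, and then letting $a\downarrow-\infty$, using that both $X_{Tt}$ and the corresponding $K_{\cdot;\log T}$-integral converge by Lemma \ref{intconv}, so that Lévy's continuity theorem lets me pass the identity of characteristic functions to the limit.
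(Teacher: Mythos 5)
Your proposal is correct and follows essentially the same route as the paper's proof: rewrite $X_{Tt}$ via the change of variables as an integral against $K_{\cdot;\log T}$, apply Lemma \ref{incr} to the Riemann--Stieltjes approximations over finite intervals, and pass the resulting identity of characteristic functions through the limit of the lower integration bound (the paper sends $s\downarrow 0$ using stochastic continuity where you send $a\downarrow-\infty$ using L\'evy's continuity theorem). The only organizational difference is that you obtain the finite-dimensional scaling directly from the $k$-dimensional version of \eqref{LeK}, whereas the paper first proves the one-dimensional relation \eqref{LevExpRel} --- noting along the way, by choosing $T=n^{1/\delta}$, that $X$ is infinitely divisible, a point your write-up leaves implicit but which is needed for the L\'evy exponents in \eqref{DSdefi} to make sense and which follows in your setting because the Riemann--Stieltjes sums are sums of independent infinitely divisible increments of $Y$ --- and then bootstraps to the multivariate case using independence of increments.
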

\begin{proof}
By Lemma \ref{intconv} the random integral in \eqref{intrep} exists as an almost sure limit and thus $X=(X_t)_{t \geq0}$ is well-defined. Similar to the proof of Lemma \ref{driving} one can show that $X$ is an additive process. It remains to show that $X$ is $(\alpha,\delta)$-dilatively stable. For $0<s<t$ and $T>0$ given a sequence of partitions
$ \log s = s_0^{(n)}= t_0^{(n)} \leq s_1^{(n)} < t_1^{(n)} \leq \ldots \leq s_n^{(n)} < t_n^{(n)} = s_{n+1}^{(n)} = \log t$ with 
$\max_{k=1,\ldots,n} \big( t_{k}^{(n)}-t_{k-1}^{(n)} \big) \to 0$ we have almost surely 
\begin{align*}
 X_{tT}-X_{sT} &= \int_{\log(sT)}^{\log(tT)} e^{u(\alpha - \delta/2)} dY_u = T^{\alpha - \delta/2} \int_{\log s}^{\log t} e^{u(\alpha - \delta/2) d(Y_{u+\log T}-Y_{\log T})}\\
 & = T^{\alpha - \delta/2} \lim_{n \to \infty} \sum_{j=0}^{n} e^{t_{j}^{(n)}(\alpha - \delta/2)} \big( K_{s_{j+1}^{(n)};\log T} - K_{s_{j}^{(n)};\log T} \big).
\end{align*}
Setting $\gamma_{j}^{(n)}:= e^{t_{j}^{(n)}(\alpha - \delta/2)}T^{(\alpha - \delta/2)} \theta$ for arbitrary $\theta \in \mathbb R$, by Lemma \ref{incr} we obtain for the Fourier transforms
\begin{equation}\label{Fou}\begin{split}
 \widehat{P}_{X_{tT} - X_{sT}}(\theta) &= \lim_{n \to \infty} \prod_{j=0}^{n} \exp \Big( \Psi_{s_{j}^{(n)},s_{j+1}^{(n)};\log T}^{K}  \big( \gamma_{j}^{(n)}, - \gamma_{j}^{(n)}  \big) \Big) \\ 
 & = \lim_{n \to \infty} \exp  \Big( \sum_{j=0}^{n} T^{\delta}  \Psi_{s_{j}^{(n)},s_{j+1}^{(n)}}^{Y}  \big( \gamma_{j}^{(n)}, - \gamma_{j}^{(n)}  \big) \Big)\\ 
  & = \lim_{n \to \infty} \Big( \exp  \Big( \sum_{j=0}^{n}   \Psi_{s_{j}^{(n)},s_{j+1}^{(n)}}^{Y}  \big( \gamma_{j}^{(n)}, - \gamma_{j}^{(n)}  \big) \Big) \Big)^{T^{\delta}}\\ 
  &= \Big( \widehat{P}_{X_{t} - X_{s}}(T^{\alpha - \delta / 2}\theta) \Big)^{T^{\delta}}.
\end{split}\end{equation}
By stochastic continuity, for $s \downarrow 0$ it follows that
\begin{align*}
 \widehat{P}_{X_{tT}} = \Big( \widehat{P}_{X_{t}}(T^{\alpha - \delta / 2}\theta) \Big)^{T^{\delta}} \qquad \mbox{ for all } t,T>0.
\end{align*}
Choosing $T=n^{1/\delta}$ for $n\in \mathbb N$ this shows that $(X_t)_{t \geq 0}$ is infinitely divisible and in terms 
of the L\'evy exponent fullfills 
\begin{equation} \label{LevExpRel}
 \Psi_{tT}^{X}(\theta)= T^{\delta} \, \Psi_{t}^{X}\big( T^{\alpha - \delta /2} \theta \big) \quad \text{ for all } t\ge 0,\,T>0\text{ and }\theta \in \mathbb R.
\end{equation}
It remains to show that this scaling relation holds for the finite-dimensional distributions, i.e. that \eqref{DSdefi} holds. Again
it suffices to show the case $k=2$, the general case follows analogously. For $k=2$ we get for any $0 \le t_1 < t_2$, \ $\theta_1,\theta_2 \in \mathbb R$, \ $T>0$ by 
independence of the increments and \eqref{Fou}, \eqref{LevExpRel} 
\begin{align*}
 &\exp \Big( \Psi_{t_1T,t_2T}^{X} \big( \theta_1, \theta_2 \big)\Big) = \E \Big[ \exp \big( i \theta_1 X_{t_1T} +i \theta_2 X_{t_2T}  \big) \Big] \\ 
 &=  \E \Big[ \exp \big( i (\theta_1 + \theta_2) X_{t_1T}\big)\Big] \> \E \Big[ i \theta_2 (X_{t_2T}-X_{t_1T})  \big) \Big] \\ 
 &= \exp\big( \Psi_{t_1T}^{X} (\theta_1+\theta_2) \big) \> \widehat P _{X_{t_2T} - X_{t_1T}}(\theta_2) \\ 
 &= \exp\big( T^{\delta} \Psi_{t_1}^{X} ( T^{\alpha - \delta/2}  (\theta_1+\theta_2)) \big) \> \big(\widehat P _{X_{t_2} - X_{t_1}}(T^{\alpha - \delta/2}\theta_2) \big)^{T^{\delta}} \\
 &= \E \Big[ \exp \big(i T^{\alpha - \delta/2} (\theta_1 + \theta_2) X_{t_1} +i T^{\alpha - \delta/2} \theta_2 (X_{t_2}-X_{t_1})  \big) \Big]^{T^\delta} \\ 
 &= \E \Big[ \exp \big(i T^{\alpha - \delta/2} \theta_1 X_{t_1} +i T^{\alpha - \delta/2} \theta_2 X_{t_2}\big) \Big]^{T^\delta} \\ 
 &= \exp \Big( T ^{\delta } \Psi_{t_1,t_2}^{X} \big( T^{\alpha - \delta/2}\theta_1,T^{\alpha - \delta/2} \theta_2 \big)\Big)
\end{align*}
concluding the proof.
\end{proof}
\begin{remark}
If $(X_t)_{t\geq0}$ is an additive $(\alpha,\delta)$-dilatively stable process then by Theorem \ref{rirep} we know that
\begin{equation}\label{x1}
X_1=\int_{-\infty}^0e^{u(\alpha-\delta/2)}dY_u\quad\text{ almost surely.}
\end{equation}
In case $\delta<0$ and $\alpha>-\delta/2$ we can decompose the integral as in part (ii) of the proof of Lemma \ref{intconv} into 
\begin{equation}\label{x1dec}
X_1=\sum_{k=0}^\infty2^{k(\alpha-\delta/2)/\delta}\sum_{\ell=1}^{2^k}\int_{\frac1{\delta}\log 2}^0e^{t(\alpha-\delta/2)}dY_t^{(\ell)},
\end{equation}
where $(Y_t^{(\ell)})_{t\in\R}$, $\ell\in\nat$, are i.i.d.\ copies of $(Y_t)_{t\in\R}$. In particular, by \eqref{x1} the series in \eqref{x1dec} converges almost surely. If we assume a bit more, namely that the series in \eqref{x1dec} converges absolutely almost surely, then Lemma \ref{sconv} applied to $a=2$, $b=1$ and $\beta=-(\alpha-\delta/2)/\delta>1$ shows that the moment condition
$$ \E \left[\Big|\int_{\frac1{\delta}\log 2}^0 e^{t(\alpha -
\delta /2)}\,dY_t\Big|^{\frac{-\delta}{\alpha-\delta/2}}\right] < \infty$$
of order $1/\beta=-\delta/(\alpha-\delta/2)<1$ necessarily has to be fulfilled. Since we have to assure the almost sure convergence of the integral in \eqref{x1} for arbitrary sequences decreasing to $-\infty$ in Theorem \ref{rirep2}, we asserted the stronger moment condition \eqref{momentcond} of order $\gamma>\frac{-\delta}{\alpha+\delta/2}>1/\beta$ which can get arbirary large for $\alpha\downarrow-\delta/2$. We were not able to derive a precise moment condition which is necessary and sufficient for the existence of the integral in \eqref{x1} in an almost sure sense.
\end{remark}

\section{Translatively stable processes}

In this section we restate Igl\'oi's \cite{Igloi} notion of translative stability, a generalization of stationarity for stochastic processes. Similar to stationary processes, a Lamperti-type transformation provides a close connection to  dilatively stable processes already laid out in \cite{Igloi}. Specifying this connection to the subclass of additive dilatively stable processes, we can relate our results from Section 2 to an integral representation for certain translatively stable processes of Ornstein-Uhlenbeck type.
\begin{defi}\label{transstable}
An infinitely divisible process $(V_t)_{t \in \mathbb R}$ is called {\it $\delta$-translatively stable} if for some $\delta \in \mathbb R$ in terms of the L\'evy exponent we have
 $$\Psi^{V}_{t_1+T, \ldots, t_k+T}(\theta_1, \ldots, \theta_k) = e^{\delta T} \Psi^{V}_{t_1, \ldots, t_k}(\theta_1,\ldots,\theta_k)$$
for all $T \in \R,\, t_1 , \ldots, t_k \in \R,\, \theta_1, \ldots, \theta_k \in \R$ .
\end{defi}
Note that for $\delta=0$ this definition coincides with stationarity. There appears a related scaling relation in the literature called $\delta${\it -time stability} by Kopp and Molchanov \cite{KM}. The definition of this scaling relation goes back to Mansuy's \cite{Man} concept of {\it infinite divisibility with respect to time} and was further investigated in \cite{EO,HO}. We will first state these concepts in our context of characteristic functions to compare them to translatively stable processes.

 A real-valued process $(D_t)_{t \geq 0}$, is said to be infinitely divisible with respect to time (IDT) if for any $n\in\nat$ we have
\begin{equation}\label{idt}
  \Psi^{D}_{nt_1, \ldots, nt_k}(\theta_1,\ldots,\theta_k) = n \cdot \Psi^{D}_{t_1,\ldots,t_k}(\theta_1,\ldots,\theta_k) 
\end{equation}
for all $t_1 , \ldots, t_k\geq0$ and $\theta_1, \ldots, \theta_k \in \R$ . A stochastically continuous process $(Z_t)_{t\geq0}$ is called $\delta$-time stable for some $\delta\not=0$ if for any $n\in\nat$ we have
 \begin{equation}\label{ts}
 \Psi^{Z}_{n^{1 / \delta}t_1, \ldots, n^{1/\delta}t_k}(\theta_1,\ldots,\theta_k) = n \cdot \Psi^{Z}_{t_1,\ldots,t_k}(\theta_1,\ldots,\theta_k)
 \end{equation}
for all $t_1 , \ldots, t_k\geq0$ and $\theta_1, \ldots, \theta_k \in \R$. As a direct consequence from \eqref{idt}, respectively \eqref{ts} we immediately get $D_0=0$ and $Z_0=0$ almost surely.

We will now show that these concepts are closely related to translative stability and thus examples of IDT, respectively $\delta$-time stable processes given in \cite{EO,HO,KM,Man} may also serve as examples of $\delta$-translatively stable processes.
\hfill
\newpage
\begin{lemma} Let $\delta\not=0$.

  (a) If $(V_t)_{t\in\R}$ is stochastically continuous and $\delta$-translatively stable with $V_t\to0$ in probability as $t\downarrow -\infty$ then $(Z_t:= V_{\log t})_{t\geq0}$ is $\delta$-time-stable. Conversely, if $(Z_t)_{t\geq0}$ is a $\delta$-time-stable process then $(V_t:=Z_{e^t})_{t\in\R}$ is $\delta$-translatively stable.
  
  (b) If $(V_t)_{t\in\R}$ is $\delta$-translatively stable then $(D_t:=V_{1 / \delta \log t})_{t\geq0}$ is IDT. Conversely, if $(D_t)_{t\geq0}$ is IDT and all its finite-dimensional distributions are weakly right-continuous then $(V_t:=D_{e^{\delta t}})_{t\in\R}$ is $\delta$-translatively stable.
\end{lemma}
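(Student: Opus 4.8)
The plan is to verify each of the four implications directly at the level of L\'evy exponents, exploiting that all four transformations are deterministic reparametrizations of the time axis. I record the defining relations in the forms: translative stability $\Psi^V_{t_1+T,\ldots,t_k+T}(\theta) = e^{\delta T}\Psi^V_{t_1,\ldots,t_k}(\theta)$; IDT $\Psi^D_{nt_1,\ldots,nt_k}(\theta) = n\,\Psi^D_{t_1,\ldots,t_k}(\theta)$; and time stability $\Psi^Z_{n^{1/\delta}t_1,\ldots,n^{1/\delta}t_k}(\theta)=n\,\Psi^Z_{t_1,\ldots,t_k}(\theta)$, the latter two for all $n\in\nat$.

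For the two forward implications I would simply substitute the reparametrization and choose the translation to absorb the multiplicative factor. For (a) forward, since $\Psi^Z_{t_1,\ldots,t_k}(\theta) = \Psi^V_{\log t_1,\ldots,\log t_k}(\theta)$, the substitution $t_i\mapsto n^{1/\delta}t_i$ produces $\log(n^{1/\delta}t_i) = \frac1\delta\log n + \log t_i$, so translative stability with $T=\frac1\delta\log n$ turns the prefactor $e^{\delta T}$ into exactly $n$; the hypothesis $V_t\to0$ as $t\downarrow-\infty$ is used only to set $Z_0:=0$ and to secure stochastic continuity of $Z$ at the origin, so that $Z$ is an admissible time-stable process. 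For (b) forward, identically, $\Psi^D_{t_1,\ldots,t_k}(\theta)=\Psi^V_{\frac1\delta\log t_1,\ldots}(\theta)$ and $\frac1\delta\log(nt_i)=\frac1\delta\log n+\frac1\delta\log t_i$, so translative stability with $T=\frac1\delta\log n$ yields the factor $n$ required by \eqref{idt}; IDT imposes no continuity, and the convention $D_0:=0$ is consistent since translative stability forces $V\to0$ in distribution in the relevant direction.

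The two converse implications are where the work lies, and they are the main obstacle. Substituting $V_t=Z_{e^t}$ gives $\Psi^V_{t_1+T,\ldots}(\theta)=\Psi^Z_{e^T e^{t_1},\ldots}(\theta)$, so I need the scaling $\Psi^Z_{\lambda t_1,\ldots}(\theta)=\lambda^\delta\Psi^Z_{t_1,\ldots}(\theta)$ for the arbitrary real $\lambda=e^T>0$, whereas time stability supplies only the discrete factors $n^{1/\delta}$. Likewise, $V_t=D_{e^{\delta t}}$ reduces (b) converse to $\Psi^D_{\lambda t_1,\ldots}(\theta)=\lambda\,\Psi^D_{t_1,\ldots}(\theta)$ for $\lambda=e^{\delta T}>0$, while IDT supplies only integer $\lambda$. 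The bridge in both cases is a two-step argument: first upgrade the integer relation to a rational one by a substitution (writing $s=n^{1/\delta}u$ one gets $\Psi^Z_{(m/n)^{1/\delta}s}(\theta)=(m/n)\Psi^Z_s(\theta)$, and analogously $\Psi^D_{(m/n)s}(\theta)=(m/n)\Psi^D_s(\theta)$); then pass to arbitrary positive reals by density and continuity in the time variable. For $Z$ this continuity is its stochastic continuity, built into the definition, which gives continuity of $\lambda\mapsto\Psi^Z_{\lambda t_1,\ldots}(\theta)$, and since $\{(m/n)^{1/\delta}\}$ is dense in $(0,\infty)$ the full relation follows; for $D$ I only have weak right-continuity of the finite-dimensional distributions, so I would approximate any $\lambda$ from above by rationals $r_j\downarrow\lambda$ and use right-continuity of $\lambda\mapsto\Psi^D_{\lambda t_1,\ldots}(\theta)$ to conclude $\Psi^D_{\lambda t_1,\ldots}(\theta)=\lambda\,\Psi^D_{t_1,\ldots}(\theta)$.

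Finally, once the real-exponent scaling is in hand, both converse claims close by one more substitution: in (a) converse, $\Psi^Z_{e^Te^{t_1},\ldots}(\theta)=(e^T)^\delta\Psi^Z_{e^{t_1},\ldots}(\theta)=e^{\delta T}\Psi^V_{t_1,\ldots}(\theta)$; in (b) converse, $\Psi^D_{e^{\delta T}e^{\delta t_1},\ldots}(\theta)=e^{\delta T}\Psi^D_{e^{\delta t_1},\ldots}(\theta)=e^{\delta T}\Psi^V_{t_1,\ldots}(\theta)$, which is exactly translative stability. It suffices to carry out each computation for general $k$, as the index structure is identical across coordinates; the only extra care needed is the treatment of coordinates at the boundary time $0$, handled by the convention $Z_0=D_0=0$ together with the continuity and limiting arguments above.
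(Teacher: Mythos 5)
Your proposal is correct and follows essentially the same route as the paper's proof: forward directions by substituting $T=\frac{1}{\delta}\log n$ into the translative-stability relation (with the convention $Z_0=D_0=0$ handling the boundary time), and the converses by first upgrading the integer scaling to rationals via a substitution and then to all positive reals using stochastic continuity of $Z$, respectively weak right-continuity of the finite-dimensional distributions of $D$ with approximation from above. Your explicit justification of why the hypotheses ($V_t\to0$ in probability, weak right-continuity) enter is in fact slightly more detailed than the paper's own write-up.
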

\begin{proof}
(a) For $n\in\nat$ let $T= \frac{1}{\delta} \log t$ in Definition \ref{transstable} and let $V_{-\infty}:=0$ then for any $t_1,\ldots,t_k\geq0$ the L\'evy exponent of $(Z_t= V_{\log t})_{t \geq 0}$ fulfills
$$\Psi^Z_{n^{1/\delta}t_1,\ldots,n^{1/\delta}t_k}=\Psi^V_{\log t_1+\frac1{\delta}\log n,\ldots,\log t_k+\frac1{\delta}\log n}=e^{\log n}\Psi^V_{\log t_1,\ldots,\log t_k}=n\cdot\Psi^Z_{t_1,\ldots,t_k}$$
showing that $(Z_t)_{t\geq0}$ is $\delta$-time stable. For the converse relation we observe that for $n,m\in\nat$ and $s_i=m^{-1/\delta}t_i$
$$\Psi^Z_{(n/m)^{1/\delta}t_1,\ldots,(n/m)^{1/\delta}t_k}=n\cdot\Psi^Z_{s_1,\ldots,s_k}=\frac{n}{m}\cdot\Psi^Z_{m^{1/\delta}s_1,\ldots,m^{1/\delta}s_k}=\frac{n}{m}\cdot\Psi^Z_{t_1,\ldots,t_k}.$$
Since $(Z_t)_{t\geq0}$ is stochastically continuous, its finite-dimensional distributions are weakly right-continuous and thus for any $S>0$ and $t_1,\ldots,t_k\geq0$ we get
$$\Psi^Z_{S^{1/\delta}t_1,\ldots,S^{1/\delta}t_k}=S\cdot\Psi^Z_{t_1,\ldots,t_k}.$$
Rewriting $S=e^{\delta T}$ for $T\in\R$ this shows that $(V_t:=Z_{e^t})_{t\in\R}$ is $\delta$-translatively stable.

(b) Setting $V_{-\infty}=0=V_\infty$ we can show that $(D_t:=V_{1 / \delta \log t})_{t\geq0}$ is IDT similar to part (a). For the converse relation our assumption on weak right-continuity guarantees that we can proceed as in part (a) to show that $(V_t:=D_{e^{\delta t}})_{t\in\R}$ is $\delta$-translatively stable.
\end{proof}
As mentioned above, a Lamperti-type transformation connects the class of dilatively stable and translatively stable processes as follows. In the special case of a convolution exponent $\delta=0$, the classical Lamperti transform \cite{Lam} is known to build a one-to-one correspondence between self-similar processes and stationary processes on the real line. We use the following generalization of the Lamperti transform due to Igl\'oi \cite{Igloi}.
\begin{defi}
Let $(X_t)_{t>0}$ be a real-valued stochastic process and $\alpha, \delta\in\R$. We call the stochastic process
 \begin{equation*}
 V=(V_t:= e^{-(\alpha- \delta/2)t} X_{e^t})_{t \in \mathbb R}
 \end{equation*}
  the {\it Lamperti-type transform} of $(X_t)_{t>0}$. Its inverse on the path space given by
 \begin{equation*}
 X=(X_t:= t^{\alpha-\delta/2} V_{\log t})_{t > 0}
 \end{equation*}
 is called the {\it inverse Lamperti-type transform} of $(V_t)_{t\in\mathbb R}$.
 \end{defi}
\begin{prop} \label{OneToOne}
(a) If $(X_t)_{t\geq0}$ is $(\alpha,\delta)$-dilatively stable for some $\alpha,
  \delta \in \R$ then its Lamperti-type transform $(V_t= e^{-(\alpha- \delta/2)t}
  X_{e^t})_{t \in \mathbb R}$ is $\delta$-translatively stable.
  
(b) If $(V_t)_{t\in\R}$ is $\delta$-translatively stable for some $\delta \in \R$
  then with $X_0:=0$ its inverse Lamperti-type transform $(X_t= t^{\alpha-\delta/2} V_{\log t})_{t \geq 0}$ is 
  $(\alpha,\delta)$-dilatively stable for any $\alpha \in \R$.
 \end{prop}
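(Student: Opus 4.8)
The plan is to reduce both directions to a single change-of-variables identity relating the L\'evy exponents of $X$ and $V$ under the Lamperti-type transform, and then to feed in the relevant scaling property. Writing out the joint characteristic function of $V_{t_1},\ldots,V_{t_k}$ and absorbing the deterministic factors $e^{-(\alpha-\delta/2)t_j}$ into the frequency arguments yields
$$\Psi^V_{t_1,\ldots,t_k}(\theta_1,\ldots,\theta_k)=\Psi^X_{e^{t_1},\ldots,e^{t_k}}\big(e^{-(\alpha-\delta/2)t_1}\theta_1,\ldots,e^{-(\alpha-\delta/2)t_k}\theta_k\big),$$
and for the inverse transform $X_t=t^{\alpha-\delta/2}V_{\log t}$ one obtains symmetrically
$$\Psi^X_{t_1,\ldots,t_k}(\theta_1,\ldots,\theta_k)=\Psi^V_{\log t_1,\ldots,\log t_k}\big(t_1^{\alpha-\delta/2}\theta_1,\ldots,t_k^{\alpha-\delta/2}\theta_k\big).$$
Both identities are just the observation that at a fixed time the transform is a deterministic scalar multiplication, so they hold whenever the relevant exponents are defined.

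For part (a) I would apply the first identity at the shifted times $t_j+T$ and then invoke dilative stability \eqref{DSdefi} with dilation factor $e^T$, recognizing $e^{t_j+T}=e^T\cdot e^{t_j}$. This produces the prefactor $(e^T)^\delta=e^{\delta T}$, exactly the constant required by Definition \ref{transstable}. The only point to verify is that the frequency arguments return unchanged: the dilation inserts a factor $(e^T)^{\alpha-\delta/2}$ multiplying $e^{-(\alpha-\delta/2)(t_j+T)}\theta_j$, which collapses to $e^{-(\alpha-\delta/2)t_j}\theta_j$, i.e. precisely the arguments appearing in $\Psi^V_{t_1,\ldots,t_k}$. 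This gives $\delta$-translative stability of $V$.

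For part (b) I first note that $X$ is infinitely divisible, so that $\Psi^X$ is well-defined: at each fixed $t>0$ the variable $X_t$ is a scalar multiple of $V_{\log t}$, hence every finite-dimensional distribution of $X$ is the image under a linear map of an infinitely divisible law and is therefore infinitely divisible. I would then apply the second identity at the dilated times $Tt_j$ and use translative stability with shift $\log T$ on the times $\log(Tt_j)=\log t_j+\log T$; the resulting constant is $e^{\delta\log T}=T^\delta$, and the substitution $(Tt_j)^{\alpha-\delta/2}=T^{\alpha-\delta/2}t_j^{\alpha-\delta/2}$ rearranges the frequency arguments into exactly $T^{\alpha-\delta/2}\theta_j$ entering $\Psi^X_{t_1,\ldots,t_k}$, which is \eqref{DSdefi} for all $t_1,\ldots,t_k>0$. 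A short separate remark settles the boundary value: since $X_0=0$ almost surely and $T\cdot0=0$, any coordinate equal to $0$ drops out of both sides of \eqref{DSdefi}, so the case with a vanishing time reduces to the strictly positive case already treated.

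The computations are elementary once the two exponent identities are available; the only genuine care required is the exponent bookkeeping that makes the frequency arguments cancel, where a stray sign or a misplaced factor of $\alpha-\delta/2$ would break the argument, so that is the step I would check most carefully.
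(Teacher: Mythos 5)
Your proposal is correct and follows essentially the same route as the paper: both reduce the claim to the change-of-variables identity relating $\Psi^V$ and $\Psi^X$ under the (inverse) Lamperti-type transform and then apply the relevant scaling property, with the same exponent bookkeeping making the frequency arguments collapse. Your explicit remark on infinite divisibility of $X$ in part (b) and your treatment of vanishing times (dropping zero coordinates rather than the paper's formal convention $V_{-\infty}:=0$) are only cosmetic differences.
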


 \begin{proof}
 The proof is a straightforward calculation using the scaling properties of translatively and dilatively stable processes.
 
 (a) For $T \in \R$, $t_1, \ldots, t_k \in \R$ and $\theta_1,\ldots,\theta_k\in\R$ we get
  \begin{align*}
    & \Psi^{V}_{t_1+T, \ldots, t_k + T}(\theta_1, \ldots, \theta_k) =\Psi^{X}_{e^{t_1+T}, \ldots, e^{t_k + T}}(e^{-(\alpha-\delta/2)(t_1+T)}\theta_1, \ldots, e^{-(\alpha-\delta/2)(t_k+T)}\theta_k) \\
    &\quad =\Psi^{X}_{e^{t_1}e^{T}, \ldots, e^{t_k}e^{T}}(e^{-(\alpha-\delta/2)(t_1+T)}\theta_1, \ldots, e^{-(\alpha-\delta/2)(t_k+T)}\theta_k) \\
    & \quad= e^{\delta T} \Psi^{X}_{e^{t_1}, \ldots, e^{t_k}}(e^{T(\alpha-\delta/2)}e^{-(\alpha-\delta/2)(t_1+T)}\theta_1, \ldots, e^{T(\alpha-\delta/2)}e^{-(\alpha-\delta/2)(t_k+T)}\theta_k)\\
    &\quad=e^{\delta T} \Psi^{X}_{e^{t_1}, \ldots, e^{t_k}}(e^{-(\alpha-\delta/2)t_1}\theta_1, \ldots, e^{-(\alpha-\delta/2)t_k}\theta_k) \\ 
    &\quad= e^{\delta T} \Psi^{V}_{t_1, \ldots, t_k}(\theta_1, \ldots, \theta_k).
  \end{align*}
  
(b) For $T>0$, $t_1, \ldots, t_k \geq0$ and $\theta_1,\ldots,\theta_k\in\R$ we get setting $V_{-\infty}:=0$
  \begin{align*}
   & \Psi^{X}_{Tt_1, \ldots, Tt_k}(\theta_1, \ldots, \theta_k) =\Psi^{V}_{\log (Tt_1), \ldots,\log (Tt_k)}((Tt_1)^{\alpha-\delta/2}\theta_1, \ldots, (Tt_k)^{\alpha-\delta/2}\theta_k) \\ 
   & \quad=\Psi^{V}_{\log T+\log t_1, \ldots,\log T+\log t_k}((Tt_1)^{\alpha-\delta/2}\theta_1, \ldots, (Tt_k)^{\alpha-\delta/2}\theta_k)\\
   & \quad=e^{\delta \log T }\Psi^{V}_{\log t_1, \ldots,\log t_k}(T^{\alpha-\delta/2}t_1^{\alpha-\delta/2}\theta_1, \ldots, T^{\alpha-\delta/2}t_k^{\alpha-\delta/2}\theta_k) \\ 
   & \quad=T^{\delta} \Psi^{X}_{t_1, \ldots, t_k}(T^{\alpha-\delta/2}\theta_1, \ldots, T^{\alpha-\delta/2}\theta_k)
  \end{align*}
  concluding the proof.
 \end{proof}
Further we can show that there is a close connection between additive dilatively stable processes and translatively stable wide-sense Ornstein-Uhlenbeck type (OU-type) processes as introduced in Maejima and Sato \cite{MaeSat2}. 
\begin{defi}
Let $(Y_t)_{t\in\R}$ be an additive process. A stochastic process $(U_t)_{t\in\R}$ is called {\it wide-sense OU-type} process with parameter $\lambda\in\R$ and background driving process $(Y_t)_{t\in\R}$ if
\begin{equation}\label{OUt}
U_t= e^{\lambda t } \left(U_0+ \int_0^t e^{- \lambda s} dY_s\right)\quad\text{ for all }t \in \R.
\end{equation}
\end{defi}
\begin{prop}\label{TraDilConnection}
(a) Let $(X_t)_{t\geq0}$ be an additive $(\alpha,\delta)$-dilatively stable process for some $\alpha,\delta\in\R$. Then its Lamperti-type transform $(V_t= e^{-(\alpha- \delta/2)t} X_{e^t})_{t \in
\mathbb R}$ is a $\delta$-translatively stable wide-sense OU-type process with parameter $\lambda=\delta/2-\alpha$ and driving process $(Y_t)_{t\in\R}$ as in Lemma \ref{driving}.

(b) For some $\alpha,\delta\in\R$ let $(V_t)_{t\in\R}$ be a $\delta$-translatively stable wide-sense OU-type process with parameter $\lambda=\delta/2-\alpha$ and driving process $(Y_t=(L( \frac{e^{\delta t}-1}{e^{
 \delta} - 1}))_{t \in \mathbb R}$, where $(L(t))_{t\in\R}$ is a two-sided L\'evy process. If  $e^{(\alpha-\delta/2)t} V_{t}\to 0$ in probability as $t\downarrow-\infty$ then the inverse Lamperti-type transform $(X_t= t^{\alpha-\delta/2} V_{\log t})_{t\geq0}$ is an additive $(\alpha,\delta)$-dilatively stable process.
\end{prop}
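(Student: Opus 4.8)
The plan is to reduce both parts to results already established, treating each direction separately. For part (a) the translative stability of $V$ is exactly Proposition \ref{OneToOne}(a), so the only new point is to verify the wide-sense OU-type equation \eqref{OUt} with $\lambda=\delta/2-\alpha$ and driving process $(Y_t)$ from Lemma \ref{driving}. First I would rewrite $V_t=e^{(\delta/2-\alpha)t}X_{e^t}$, so that $e^{\lambda t}=e^{(\delta/2-\alpha)t}$, $e^{-\lambda s}=e^{(\alpha-\delta/2)s}$, and in particular $V_0=X_1$. Then \eqref{OUt} is equivalent to the identity $X_{e^t}-X_1=\int_0^t e^{(\alpha-\delta/2)s}\,dY_s$ for all $t\in\R$. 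This is precisely the increment formula derived in the proof of Theorem \ref{rirep}, namely $X_b-X_a=\int_{\log a}^{\log b}e^{u(\alpha-\delta/2)}\,dY_u$ for $0<a<b$, applied with $a=1$, $b=e^t$ when $t>0$ and, for $t<0$, with $a=e^t$, $b=1$ together with the antisymmetry convention of the random integral; the case $t=0$ is trivial. This settles part (a).

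For part (b) the dilative stability of $X=(t^{\alpha-\delta/2}V_{\log t})_{t\geq0}$, and with it the infinite divisibility of its finite-dimensional distributions, follows directly from Proposition \ref{OneToOne}(b), so the substantive work is to establish the additive structure, which I would obtain by identifying $X$ with the integral process \eqref{intrep}. Writing out \eqref{OUt} with $\lambda=\delta/2-\alpha$ gives $e^{(\alpha-\delta/2)t}V_t=V_0+\int_0^t e^{(\alpha-\delta/2)s}\,dY_s$ for all $t\in\R$, equivalently $V_0-\int_t^0 e^{(\alpha-\delta/2)s}\,dY_s=e^{(\alpha-\delta/2)t}V_t$. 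The key step is to let $t\downarrow-\infty$ and invoke the hypothesis $e^{(\alpha-\delta/2)t}V_t\to0$ in probability. Since the family $(\int_t^0 e^{(\alpha-\delta/2)s}\,dY_s)$ indexed by $-t\geq0$ has independent increments, convergence in probability upgrades to almost sure convergence by the results of \cite{JurVer} invoked in the proofs of Theorem \ref{rirep} and Lemma \ref{intconv}; hence $V_0=\int_{-\infty}^0 e^{(\alpha-\delta/2)s}\,dY_s$ almost surely and the improper integral converges. Combining this with the OU identity yields, for $t>0$, the representation $X_t=e^{(\alpha-\delta/2)\log t}V_{\log t}=\int_{-\infty}^{\log t}e^{u(\alpha-\delta/2)}\,dY_u$, which is exactly \eqref{intrep}.

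Once this representation is in hand, the additive structure follows as in the proof of Theorem \ref{rirep2}: for $0<t_1<t_2$ the increment $X_{t_2}-X_{t_1}=\int_{\log t_1}^{\log t_2}e^{u(\alpha-\delta/2)}\,dY_u$ depends only on $Y$ over $[\log t_1,\log t_2]$, and since $V_0$ is measurable with respect to $Y$ restricted to $(-\infty,0]$, increments of $X$ over disjoint time intervals are independent; stochastic continuity on $(0,\infty)$ and the c\`adl\`ag property are inherited from the random integral construction as in Lemma \ref{driving}, while $X_0=0$ and stochastic continuity at $0$ are guaranteed precisely by the hypothesis $e^{(\alpha-\delta/2)t}V_t\to0$. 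I expect the main obstacle to be this identification step in part (b): making rigorous, without assuming conditions (a) or (b) of Lemma \ref{intconv}, that the mere convergence in probability of $e^{(\alpha-\delta/2)t}V_t$ forces the improper integral $\int_{-\infty}^0 e^{(\alpha-\delta/2)s}\,dY_s$ to converge almost surely to $V_0$, so that the full conclusion of Theorem \ref{rirep2} becomes available.
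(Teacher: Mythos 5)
Your proposal is correct. Part (a) is exactly the paper's argument: Proposition \ref{OneToOne}(a) gives translative stability, and the increment identity $X_b-X_a=\int_{\log a}^{\log b}e^{u(\alpha-\delta/2)}\,dY_u$ from the proof of Theorem \ref{rirep}, applied with $a=1$, $b=e^t$ (and the antisymmetry convention for $t<0$), verifies \eqref{OUt}. In part (b) you take a genuinely different route. The paper never touches the improper integral: it reads off directly from \eqref{OUt} that $X_t-X_s=\int_{\log s}^{\log t}e^{(\alpha-\delta/2)u}\,dY_u$ for $0<s<t$, and concludes independent increments, stochastic continuity and c\`adl\`ag paths from this increment formula alone, using the hypothesis only to set $X_0:=0$ with $X_t\to 0$ in probability as $t\downarrow0$. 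You instead first upgrade the hypothesis to almost sure convergence of $\int_{t}^{0}e^{(\alpha-\delta/2)s}\,dY_s$ to $V_0$ as $t\downarrow-\infty$ (legitimate: these are limits of sums of independent random variables, so convergence in probability implies almost sure convergence, which is precisely the JurVer machinery the paper cites elsewhere), identify $X$ with the representation \eqref{intrep}, and only then derive additivity. This is heavier than necessary --- in particular, the ``main obstacle'' you flag at the end is already resolved by your own independence argument, since the limit in probability is \emph{assumed} here rather than constructed, so no condition from Lemma \ref{intconv} is needed --- but it buys something the paper's terser proof leaves implicit: the full integral representation $V_t=\int_{-\infty}^{t}e^{(u-t)(\alpha-\delta/2)}\,dY_u$ (which the paper only records afterwards as a corollary) and, with it, an explicit justification of almost sure right-continuity of the paths of $X$ at $t=0$, a point the paper glosses over when it claims c\`adl\`ag paths from convergence in probability alone.
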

\begin{proof}
(a) By Proposition \ref{OneToOne}(a) the process $(V_t)_{t\in\R}$ is $\delta$-translatively stable and by Theorem \ref{rirep} we have
$$e^{\lambda t}\left(V_0+\int_0^te^{-\lambda u}\,dY_u\right)=e^{-(\alpha-\delta/2)t}\big(X_1+(X_{e^t}-X_1)=e^{-(\alpha- \delta/2)t} X_{e^t}=V_t$$
showing that $(V_t)_{t\in\R}$ is a wide-sense OU-type process as asserted.

(b) By Proposition \ref{OneToOne}(b) the process $(X_t)_{t\geq0}$ is $(\alpha,\delta)$-dilatively stable with $X_t\to 0=:X_0$ in probability as $t\downarrow 0$ by assumption. For $0<s<t$ we observe by \eqref{OUt}
$$X_t-X_s=t^{\alpha-\delta/2} V_{\log t}-s^{\alpha-\delta/2} V_{\log s}=\int_{\log s}^{\log t}e^{(\alpha-\delta/2)u}dL(\tfrac{e^{\delta u}-1}{e^\delta-1})$$
showing that $(X_t)_{t\geq0}$ has independent increments, is stochastically continuous and has c\`adl\`ag paths.
\end{proof}
Combining Proposition \ref{TraDilConnection} with the results of Section 2 we can directly state an integral representation for translatively stable wide-sense OU-type processes.
\begin{cor}
(a) For some $\alpha,\delta\in\R$ let $(V_t)_{t\in\R}$ be a $\delta$-translatively stable wide-sense OU-type process with parameter $\lambda=\delta/2-\alpha$ and driving process $(Y_t=(L( \frac{e^{\delta t}-1}{e^{
 \delta} - 1}))_{t \in \mathbb R}$, where $(L(t))_{t\in\R}$ is a two-sided L\'evy process. If  $e^{(\alpha-\delta/2)t} V_{t}\to 0$ in probability as $t\downarrow-\infty$ then $(V_t)_{t \in R}$ has the integral representation 
 \begin{equation*}
  V_t = \int_{-\infty}^t e^{(u-t)(\alpha-\delta/2)}d Y_u.
 \end{equation*}

(b) For $\delta\not=0$ let $( L(t))_{t \in \mathbb R}$ be a two-sided L\'evy process and $(Y_t)_{t \in \mathbb R} = ( L( \frac{e^{\delta t}-1}{e^{ \delta} - 1} ))_{t \in \mathbb R}$ be the time-changed L\'evy process such that for some $\alpha\in\R$ one of the conditions (a) or (b) in Lemma \ref{intconv} is fullfilled. Then the process $(V_t)_{t \in\R}$ given by
\begin{align*}
 V_t:=  \int_{-\infty}^{t} e^{(u-t) (\alpha -\delta / 2)} dY_u 
\end{align*}
is well-defined and a $\delta$-translatively stable wide-sense OU-type process with parameter $\lambda=\delta/2-\alpha$ and driving process $(Y_t)_{t\in\R}$.
\end{cor}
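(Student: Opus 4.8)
The plan is to read both parts as manipulations of the wide-sense OU-type defining relation \eqref{OUt} combined with the dilative/translative correspondence of Proposition \ref{OneToOne}. Throughout I would write $\lambda=\delta/2-\alpha$, so that $-\lambda=\alpha-\delta/2$ and $e^{(u-t)(\alpha-\delta/2)}=e^{\lambda t}e^{-\lambda u}$; in this notation the asserted representation is simply $V_t=e^{\lambda t}\int_{-\infty}^t e^{-\lambda u}\,dY_u$, and the two directions become a passage between the OU-type relation and an improper integral at $-\infty$.

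For part (a) I would argue directly from \eqref{OUt}, without invoking Section 2. Rewriting \eqref{OUt} as $e^{-\lambda t}V_t=V_0+\int_0^t e^{-\lambda s}\,dY_s$ and noting that the hypothesis reads $e^{(\alpha-\delta/2)t}V_t=e^{-\lambda t}V_t\to0$ in probability as $t\downarrow-\infty$, I would conclude that $\int_0^t e^{-\lambda s}\,dY_s\to -V_0$ in probability, i.e.\ $\int_{-\infty}^0 e^{-\lambda u}\,dY_u$ exists and equals $V_0$. Because $Y$ has independent increments, this convergence in probability upgrades to almost sure convergence by Corollary A2.3 in \cite{JurVer}. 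Splitting $\int_{-\infty}^t=\int_{-\infty}^0+\int_0^t$ and using \eqref{OUt} once more then gives $V_t=e^{\lambda t}\bigl(V_0+\int_0^t e^{-\lambda s}\,dY_s\bigr)=e^{\lambda t}\int_{-\infty}^t e^{-\lambda u}\,dY_u$, which is the claim. The translative stability of $V$ is not actually needed for (a) beyond ensuring that $V$ is infinitely divisible.

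For part (b) I would go the other way. Well-definedness of $V_t$ for every $t$ is immediate from Lemma \ref{intconv}: under either condition (a) or (b) the integral $\int_{-\infty}^t e^{u(\alpha-\delta/2)}\,dY_u$ converges almost surely, and $V_t$ is $e^{-(\alpha-\delta/2)t}$ times this integral. Setting $X_t:=\int_{-\infty}^{\log t}e^{u(\alpha-\delta/2)}\,dY_u$ for $t>0$ and $X_0:=0$, Theorem \ref{rirep2} shows that $(X_t)_{t\geq0}$ is additive $(\alpha,\delta)$-dilatively stable; since $V_t=e^{-(\alpha-\delta/2)t}X_{e^t}$ is exactly the Lamperti-type transform of $X$, Proposition \ref{OneToOne}(a) yields that $V$ is $\delta$-translatively stable.

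It then remains to check that $V$ is of wide-sense OU-type with the prescribed driving process $Y$ itself, rather than with the driving process furnished by Lemma \ref{driving} as in Proposition \ref{TraDilConnection}(a). I expect this bookkeeping to be the only delicate point, and I would sidestep identifying the two driving processes by verifying \eqref{OUt} directly: additivity of the random integral gives $V_t=e^{\lambda t}\int_{-\infty}^t e^{-\lambda u}\,dY_u=e^{\lambda t}\bigl(\int_{-\infty}^0 e^{-\lambda u}\,dY_u+\int_0^t e^{-\lambda s}\,dY_s\bigr)$, and since $V_0=X_1=\int_{-\infty}^0 e^{-\lambda u}\,dY_u$ this is precisely $e^{\lambda t}(V_0+\int_0^t e^{-\lambda s}\,dY_s)$. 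The main care here is to justify the additive splitting of the improper integral at $-\infty$, which is exactly where the existence granted by Lemma \ref{intconv} and the interpretation of the integral as an almost sure limit are used.
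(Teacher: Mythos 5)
Your proposal is correct, but it follows a genuinely different route from the paper, most visibly in part (a). The paper's proof of (a) is a two-line citation: Proposition \ref{TraDilConnection}(b) turns $V$ into the additive $(\alpha,\delta)$-dilatively stable process $X_t=t^{\alpha-\delta/2}V_{\log t}$, Theorem \ref{rirep} gives $X_t=\int_{-\infty}^{\log t}e^{u(\alpha-\delta/2)}\,dY_u$ with respect to the background driving process of Lemma \ref{driving}, and one transforms back; this leaves implicit the change-of-variables identification $\int_1^{e^t}u^{-\alpha+\delta/2}\,dX_u=Y_t$ needed to see that the representation is with respect to the \emph{given} $Y$ rather than the process Lemma \ref{driving} manufactures from $X$. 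Your argument for (a) bypasses all of this: working directly from \eqref{OUt}, the hypothesis $e^{-\lambda t}V_t\to0$ forces $\int_{-\infty}^0 e^{-\lambda u}\,dY_u$ to exist (in probability, hence almost surely by independence of increments, the same upgrade the paper invokes from \cite{JurVer} in Theorem \ref{rirep}) and to equal $V_0$, after which the representation is an algebraic identity. This is more elementary, never uses dilative or even translative stability, and automatically lands on the prescribed $Y$. For part (b) your skeleton matches the paper's (Lemma \ref{intconv} and Theorem \ref{rirep2} plus the Lamperti correspondence), except that where the paper cites Proposition \ref{TraDilConnection}(a) --- which again yields the OU property with the Lemma \ref{driving} driving process, leaving the identification with $Y$ implicit --- you take translative stability from Proposition \ref{OneToOne}(a) and verify \eqref{OUt} directly by splitting the improper integral at $0$. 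In short: the paper buys brevity by citation, while your version is self-contained precisely at the points where those citations conceal a driving-process identification.
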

\begin{proof}
Part (a) follows directly from Proposition \ref{TraDilConnection}(b) and Theorem \ref{rirep}, whereas part (b) is a direct consequence of Theorem \ref{rirep2} together with Proposition \ref{TraDilConnection}(a).
\end{proof}
\noindent {\bf Acknowledgement.} We are grateful to M\'aty\'as Barczy and Gyula Pap for stimulating and helpful discussions on earlier versions of this manuscript.

\bibliographystyle{plain}

\end{document}